\newtheorem{theorem}{Theorem}[section]
\newtheorem{lemma}{Lemma}[section]
\newtheorem{proposition}{Proposition}[section]
\newtheorem{corollary}{Corollary}[section]
\theoremstyle{definition}
\newtheorem{definition}{Definition}[section]
\newtheorem{example}{Example}[section]
\newtheorem{remark}{Remark}[section]
\renewenvironment{proof}[1][Proof.]{\begin{trivlist}
		\item[\hskip \labelsep {\bfseries #1}]}{\end{trivlist}}
\newcommand{\gap}{\vspace{2ex}}
\newcommand{\smallgap}{\vspace{1ex}}
\newcommand{\conv}{\operatorname{conv}}
\newcommand{\core}[1]{\langle #1 \rangle}
\newcommand{\ext}{\operatorname{ext}}
\newcommand{\spn}{\operatorname{span}}
\newcommand{\epi}{\operatorname{epi}}
\newcommand{\ran}{\operatorname{ran}}
\newcommand{\R}{\mathcal{R}}
\newcommand{\Rn}{\mathcal{R}^n}
\newcommand{\Sn}{\mathcal{S}^n}
\newcommand{\Hn}{\mathcal{H}^n}
\newcommand{\V}{\mathcal{V}}
\newcommand{\W}{\mathcal{W}}
\newcommand{\G}{\mathcal{G}}
\newcommand{\lc}{\lambda(c)}
\newcommand{\lx}{\lambda(x)}
\newcommand{\ly}{\lambda(y)}
\newcommand{\lu}{\lambda(u)}
\newcommand{\barx}{\overline{x}}
\newcommand{\abs}[1]{\left\vert #1 \right\vert}
\newcommand{\norm}[1]{\left\Vert #1 \right\Vert}
\newcommand{\ip}[2]{\left< #1, #2 \right>}
\title{\bf Transfer  principles, Fenchel conjugate and  subdifferential formulas  in  \\
	Fan-Theobald-von Neumann systems
}
\author{Juyoung Jeong \\
	Applied Algebra and Optimization Research Center \\
	Sungkyuankwan University \\
	2066 Seobu-ro, Suwon 16419, Republic of Korea \\
	jjycjn@skku.edu\\[1ex]
	and\\[1ex]
	M. Seetharama Gowda\\
	Department of Mathematics and Statistics\\
	University of Maryland, Baltimore County\\
	Baltimore, Maryland 21250, USA\\
	gowda@umbc.edu}
\date{\today}
\begin{document}
	
\maketitle\vspace{1cm}

\begin{abstract}
    A Fan-Theobald-von Neumann system \cite{gowda2019optimizing} is a triple $(\V,\W,\lambda)$, where $\V$ and $\W$ are  real inner product spaces and $\lambda:\V\to \W$ is a norm-preserving map satisfying a Fan-Theobald-von Neumann type inequality together with a condition for equality.  Examples include Euclidean Jordan algebras, systems induced by certain hyperbolic polynomials, and normal decomposition systems (Eaton triples).  The present article is a continuation of \cite{gowda2023commutativity} where the concepts of commutativity, automorphisms, majorization, and reduction were introduced and elaborated. Here, we describe some transfer principles and present Fenchel conjugate and subdifferential formulas.
\end{abstract}

\vspace{1cm}

\noindent{\bf Key Words}: Fan-Theobald-von Neumann system, eigenvalue map, spectral set, spectral function, transfer principle, subdifferential \\

\noindent{\bf AMS 2020 Subject Classification:}
 17C20, 46N10, 49J52, 52A41, 90C25.

\gap

{\it We dedicate this paper to Henry Wolkowicz, University of Waterloo, Canada, on the occasion
of his 75th birthday; We wish him a long, healthy, and productive life.
}
%%%%%%%%%%%%%%%%%%%%%%%%%%%%%%%%%%%%%%%%%%%%%%%%%%%%%%%%%%%%%%%%%
\section{Introduction}

Consider two real inner product spaces $\V$ and $\W$ with a (nonlinear) map $\lambda:\V\to \W$. For each $u\in \V$, let $[u]=\{x\in \V: \lx=\lu\}$. We say that the triple $(\V, \W, \lambda)$ is a Fan-Theobald-von Neumann system (FTvN system, for short) \cite{gowda2019optimizing} if 
\begin{equation} \label{eq: intro ftvn}
	\max \Big\{\! \ip{c}{x} : \, x \in [u] \Big\} = \ip{\lc}{\lu} \quad (\forall c, u \in \V).
\end{equation}
The inequality 
\[ \ip{x}{y} \leq \ip{\lx}{\ly} \quad (x, y \in \V), \]
which comes from \eqref{eq: intro ftvn} will be called {\it Fan-Theobald-von Neumann inequality} and the equality
\[ \ip{x}{y} = \ip{\lx}{\ly} \]
defines the {\it commutativity} of $x$ and $y$ in this system. 

\smallgap

Examples of FTvN systems abound. Given a real inner product space $\V$, the triple $(\V,\R, \lambda)$ with $\lx=||x||$ is a FTvN system in which the  Fan-Theobald-von Neumann inequality reduces to the Cauchy-Schwarz inequality. Taking $\V=\Rn=\W$ and $\lx=x^\downarrow$ (the decreasing rearrangement of $x$), we get a FTvN system where the   Fan-Theobald-von Neumann inequality reduces to the Hardy-Littlewood-Polya rearrangement inequality.
When $\V=\Sn$ (the space of all real $n\times n$ symmetric matrices with trace inner product) and $\W=\Rn$ with $\lambda(X)$ denoting the vector of eigenvalues of $X\in \Sn$ written in the decreasing order, we obtain the FTvN system $(\Sn,\Rn,\lambda)$, where the FTvN inequality reduces to the Ky Fan's inequality $\ip{X}{Y} \leq \ip{\lambda(X)}{\lambda(Y)}$ with  the corresponding equality case characterized by Theobald \cite{theobald1975inequality}. Considering the case of $\V=M_n$ (the space of all $n\times n$ complex matrices), $\W = \Rn$ and $\lambda(X) = s(X)$ (the vector of singular values of $X$ written in the decreasing order), one obtains a FTvN system where the FTvN inequality reduces to that of von Neumann.  Other examples include \cite{gowda2019optimizing}:

\begin{itemize}
	\item [$(a)$] The triple $(\V, \Rn, \lambda)$, where $\V$ is a Euclidean Jordan algebra of rank $n$ carrying the trace inner product with $\lambda : \V \to \Rn$ denoting the eigenvalue map,
	
	\item [$(b)$] The triple $(\V, \Rn, \lambda)$, where $\V$ is a finite dimensional real vector space and $p$ is a real homogeneous polynomial of degree $n$ that is  hyperbolic with respect to a vector $e\in \V$, complete and isometric, with $\lx$ denoting the vector of roots of the univariate polynomial $t \to p(te-x)$ written in the decreasing order, and
	
	\item [$(c)$] The triple $(\V, \W, \gamma)$ where $(\V, \G, \gamma)$ is a normal decomposition system (in particular, an Eaton triple) and $\W := \operatorname{span}(\gamma(\V))$.
\end{itemize}

Motivated by optimization considerations, FTvN systems were introduced in \cite{gowda2019optimizing} to transform linear/distance optimization problems
over certain sets in $\V$ (of the form $E=\lambda^{-1}(Q)$ - called spectral sets)  to problems over sets in $\W$. For example, it was shown in  \cite{gowda2019optimizing}, Section 3.1 that in a FTvN system $(\V,\W\,\lambda)$, for any $c\in \V$, $\phi:\W\rightarrow \R$, and any spectral set $E$ in $\V$,
\begin{equation}\label{eq: linear sup}
\sup_{x\in E} \Big\{\! \ip{c}{x} + (\phi \circ \lambda)(x) \Big\} = \sup_{u \in \lambda(E)} \Big\{\! \ip{\lc}{u} + \phi(u) \Big\}
\end{equation} 
with attainment of one supremum implying the attainment of the other and additionally implying a commutativity relation. 
In \cite{gowda2022commutation}, certain commutation principles were formulated and described in the setting of FTvN systems. A detailed analysis of the concepts of commutativity, automorphisms, majorization, and reduction in  Fan-Theobald-von Neumann systems was carried out in \cite{gowda2023commutativity}.

\smallgap

In the present paper, we focus on  FTvN systems $(\V, \W, \lambda)$ that come with an associated {\it reduced system} $(\W, \W, \mu)$; by definition, the two FTvN systems are related by the conditions $\operatorname{ran} \, \mu \subseteq \operatorname{ran} \, \lambda$ and $\mu \circ \lambda = \lambda$; see  \cite{gowda2023commutativity} for some examples and properties of FTvN systems with associated reduced systems. 

\smallgap

In the first part of the paper, we consider transfer principles dealing with the invariance of certain topological/convexity properties and/or operations. While such principles have been extensively studied in the context of Euclidean Jordan algebras \cite{baes2007convexity,sun2008lowner,lourencco2020generalized,jeong2016spectral,jeong2017spectral,gowda2018connectedness}, our goal here is to present them in the broader context of FTvN systems.
In a FTvN system, we specifically describe statements of the form 
\[ \lambda^{-1}(Q^\diamond)= \big( \lambda^{-1}(Q) \big)^\diamond, \]
where $\diamond$ is a topological/convexity operation such as the closure, interior, convex hull, etc. We also formulate a generalization of the celebrated result of Davis \cite{davis1957all} relating the convexity of $\phi\circ \lambda$ with that of $\phi$, where $\phi:\W\to \R$.
The second part of the paper is devoted to the study of the Fenchel conjugate and subdifferential of $\phi\circ \lambda$.  In the setting of a FTvN system, for a spectral set $S$, we derive the (Fenchel conjugate) formula 
\[ (\phi \circ \lambda)_{S}^{\ast}(z) = \phi_{\lambda(S)}^{\ast}\big( \lambda(z) \big), \]
which happens to be equivalent to \eqref{eq: intro ftvn} as well  as to (\ref{eq: linear sup}). Regarding subdifferentials, we show that 
\[ y \in \partial_{S} \Phi(\barx) \Longleftrightarrow \ly \in \partial_{\lambda(S)} \phi \big( \lambda(\barx) \big) \text{ and $y$ commutes with $\barx$}, \]
which also happens to be equivalent to \eqref{eq: intro ftvn}. 
These results generalize results of Lewis \cite{lewis1996convex} and Bauschke et al. \cite{bauschke2001hyperbolic} proved in the settings of normal decomposition systems and hyperbolic polynomials.

\smallgap

An outline of the paper is as follows: In Section 2, we cover some definitions, examples, and some known results. Section 3 deals with transfer principles. In Section 4, we describe the Fenchel conjugate of $\phi\circ \lambda$ and a subdifferential formula.

%%%%%%%%%%%%%%%%%%%%%%%%%%%%%%%%%%%%%%%%%%%%%%%%%%%%%%%%%%%%%%%%%
\section{Preliminaries}
Throughout this paper, we deal with  real inner product spaces with  $\ip{x}{y}$ denoting the inner product between two elements $x$ and $y$; we let  $\norm{x}$ denote the (induced) norm of $x$. In any such space, for a set $S$, we write $\overline{S}$, $S^\circ$, $\partial(S)$, $S^{c}$, and $S^\perp$ for the \textit{closure}, \textit{interior}, \textit{boundary}, \textit{(set-theoretic) complement}, and \textit{orthogonal complement} of $S$, respectively. We also write $\conv(S)$ (or $\conv S$), $\overline{\conv}\,S$, $\ext(S)$, and $\spn(S)$ for the \textit{convex hull}, \textit{closed convex hull}, \textit{set of all extreme points}, and \textit{span} of $S$, respectively. Throughout, $\R^n$ denotes the  real $n$-dimensional Euclidean space carrying the standard inner product.

\smallgap

We recall the  expanded version of the definition of a FTvN system.

\begin{definition}[{\bf FTvN system}, \cite{gowda2019optimizing}] \label{def: ftvn system}
	A \textit{Fan-Theobald-von Neumann system} (\textit{FTvN system}, for short) is a triple $(\V, \W, \lambda)$, where
	$\V$ and $\W$ are  real inner product spaces and $\lambda:\V\to \W$ is a map satisfying the following conditions:
	\begin{itemize}
		\item [$(A1)$] $\norm{\lx} = \norm{x}$ for all $x \in \V$.
		\item [$(A2)$] $\ip{x}{y} \leq \ip{\lx}{\ly}$ for all $x, y \in \V$.
		\item [$(A3)$] For any $c \in \V$ and $q \in \lambda(\V)$, there exists $x \in \V$ such that
		\begin{equation}\label{A3}
			\lx = q \quad \text{and} \quad \ip{c}{x} = \ip{\lc}{\lx}.
		\end{equation}
	\end{itemize}
\end{definition}

{\it It has been observed in \cite{gowda2023commutativity} that conditions $(A1)$--$(A3)$ are equivalent to \eqref{eq: intro ftvn}.
}

\smallgap

Let $(\V, \W, \lambda)$ be a FTvN system. The map $\lambda$ will be called the \textit{eigenvalue map}. We denote the \textit{range} of $\lambda$ by $\ran \, \lambda$; the \textit{$\lambda$-orbit of an element $x \in \V$} is defined by
\[ [x] := \{ y \in \V : \ly = \lx \}. \]
More generally, for a set $S$ in $\V$, the \textit{$\lambda$-orbit of $S$} is
\[ [S] := \bigcup_{x \in S} [x]. \]

\smallgap

A set $E$ in $\V$ is said to be a \textit{spectral set} if it is of the form $E = \lambda^{-1}(Q)$ for some $Q \subseteq \W$, or equivalently, a union of $\lambda$-orbits. It is easy to see that a set $E$ is a spectral set if and only if the implication $x \in E \Rightarrow [x] \subseteq E$ holds. For any set $S$ in  $\V$, $[S]$ is a spectral set; we call it the {\it spectral hull} of $S$. Also, the set
\[ \core{S} := [S^c]^c, \]
being the complement of a spectral set,
is a spectral set; we will call this, the {\it spectral core} of $S$. Note that $[S]$ is the smallest spectral set containing $S$, while $\core{S}$ is the largest spectral set contained in $S$. Moreover, $S$ is spectral if and only if $\core{S} = [S].$

\smallgap

A real-valued function $\Phi : \V \to \R$ is a {\it spectral function} if it is of the form $\Phi = \phi \circ \lambda$ for some function $\phi : \W \to \R$, or equivalently, $\Phi$ is a constant on every $\lambda$-orbit. Note that $\phi$ need be defined only on $\lambda(\V)$. Also, in Section 4, while discussing Fenchel conjugate and subdifferentials, we allow $\phi$ and $\Phi$ to be extended real-valued functions.

\smallgap

The following result describes how spectral sets and spectral functions are related.

\begin{proposition} \label{prop: epigraph} 
	Let $(\V, \W, \lambda)$ be a FTvN system. Then the following hold.
	\begin{itemize}
		\item[$(a)$] A set $E$ in $\V$ is spectral if and only if its indicator function $\mathbf{1}_{E} : \V \to \R$ given by 
		\[ \mathbf{1}_{E}(x) = 
		\begin{cases} 
			1 & \text{ if } x \in E,   \\
			0 & \text{ if } x \notin E
		\end{cases} \]
		is a spectral function on $\V$.
		
		\item[$(b)$] A function $\Phi : \V \to \R$ is spectral if and only if its epigraph given by
		\[ \epi \Phi = \big\{ (t, x) \in \R \times \V : t \geq \Phi(x) \big\} \]
		is a spectral set in the (product) FTvN system $(\R \times \V, \R \times \W, \Lambda),$ where $\Lambda \big( (t, x) \big) = \big(t, \lx \big)$.
	\end{itemize}
\end{proposition}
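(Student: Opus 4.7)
Both parts reduce to the orbit characterizations recalled just above the proposition: a set $E$ is spectral if and only if it is a union of $\lambda$-orbits, and a function $\Phi$ is spectral if and only if it is constant on every $\lambda$-orbit. The plan is to translate each biconditional in the proposition into one of these orbit conditions.

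For part $(a)$, the key observation is that $\mathbf{1}_{E}$ is constant on a given orbit $[x]$ precisely when either $[x] \subseteq E$ or $[x] \cap E = \emptyset$; either case forces the implication $x \in E \Rightarrow [x] \subseteq E$ on that orbit. Conversely, if that implication holds on every orbit, then $\mathbf{1}_{E}$ takes the value $1$ identically on each orbit meeting $E$ and $0$ identically on each orbit disjoint from $E$. Hence $\mathbf{1}_{E}$ is spectral exactly when $E$ is spectral.

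For part $(b)$, the first step is to identify the $\Lambda$-orbits in the product FTvN system $(\R \times \V, \R \times \W, \Lambda)$. Since $\Lambda\big((s,y)\big) = \Lambda\big((t,x)\big)$ amounts to $s = t$ together with $\lambda(y) = \lambda(x)$, the $\Lambda$-orbit of $(t,x)$ is exactly $\{t\} \times [x]$. With this in hand, I would argue as follows. If $\Phi$ is spectral and $(t,x) \in \epi \Phi$, then for every $y \in [x]$ we have $\Phi(y) = \Phi(x) \leq t$, so $\{t\} \times [x] \subseteq \epi \Phi$; thus $\epi \Phi$ is a union of $\Lambda$-orbits and hence spectral. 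Conversely, if $\epi \Phi$ is spectral, apply this to $t := \Phi(x)$: since $(\Phi(x), x) \in \epi \Phi$, its entire orbit $\{\Phi(x)\} \times [x]$ lies in $\epi \Phi$, giving $\Phi(y) \leq \Phi(x)$ for every $y \in [x]$. Exchanging the roles of $x$ and $y$ yields the reverse inequality, so $\Phi$ is constant on $[x]$ and therefore spectral.

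The computations are short. The only point deserving any care is the identification of the $\Lambda$-orbits of the product system in $(b)$, since this is precisely where the proposition quietly encodes the epigraphical viewpoint; once that identification is in place, each direction of each biconditional is essentially an unpacking of definitions.
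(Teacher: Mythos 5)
Your proof is correct, but it takes a genuinely different route from the paper's. You reduce both parts to the orbit characterizations stated in the preliminaries (a set is spectral iff it is a union of $\lambda$-orbits, a function is spectral iff it is constant on each orbit), and the one nontrivial step --- identifying the $\Lambda$-orbit of $(t,x)$ in the product system as $\{t\}\times[x]$ --- is handled correctly; the converse in $(b)$ via the boundary point $(\Phi(x),x)$ is exactly the right move. The paper instead works directly with the representations $E=\lambda^{-1}(Q)$ and $\Phi=\phi\circ\lambda$ and constructs explicit witnesses: in $(a)$ it takes $Q=\{u:\phi(u)=1\}$, and in $(b)$ it defines $\phi(u)=\inf\{t:(t,u)\in Q\}$ on $\lambda(\V)$, which requires the small extra observation that each vertical section of $Q$ over $\lambda(\V)$ is bounded below. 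Your argument is shorter and avoids that construction, at the cost of leaning on the ``equivalently'' clauses of the definitions (which the paper asserts without proof, so this is legitimate). What the paper's construction buys is the explicit identity $\epi\Phi=\Lambda^{-1}(\epi\phi)$ for $\Phi=\phi\circ\lambda$, which is the precise form invoked later (in the Davis-type convexity theorem); your proof establishes spectrality of $\epi\Phi$ without exhibiting this particular preimage representation, though it follows in one line from $\Phi=\phi\circ\lambda$. One cosmetic omission: the proposition implicitly asserts that $(\R\times\V,\R\times\W,\Lambda)$ is itself a FTvN system, which the paper notes (without proof) at the start of its argument for $(b)$ and which you should at least record, even though your orbit argument does not actually use the FTvN axioms.
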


\begin{proof}
	$(a)$ Suppose $E$ in $\V$ is a spectral set, i.e., $E = \lambda^{-1}(Q)$ for some $Q\subseteq \W$. Then it is easy to verify that $\mathbf{1}_{E} = \mathbf{1}_{Q} \circ \lambda$; thus $\mathbf{1}_{E}$ is a spectral function.
	
	Conversely, suppose the indicator function $\mathbf{1}_{E} : \V \to \R$ is spectral so that $\mathbf{1}_{E} = \phi \circ \lambda$ for some $\phi : \W \to \R$. Define
	\[ Q = \{ u \in \W : \phi(u) = 1 \}. \]
	We now show $E = \lambda^{-1}(Q)$, proving that $E$ is spectral. To see this, take $x \in E$. Since $1 = \mathbf{1}_{E}(x) = \phi \big( \lx \big)$, we have $\lx \in Q$. Thus, $x \in \lambda^{-1}(Q)$. For the reverse implication, take $x \in \lambda^{-1}(Q)$. Then $\lx \in Q$, hence $\mathbf{1}_{E}(x) = \phi \big( \lx \big) = 1$, implying $x \in E$. Consequently, $E = \lambda^{-1}(Q)$.
	
	\smallgap
	
	$(b)$ It is easy to see that $(\R \times \V, \R \times \W, \Lambda)$ with $\Lambda \big( (t, x) \big) = \big( t, \lx \big)$ is a FTvN system. Given a spectral function $\Phi : \V \to \R$ such that $\Phi = \phi \circ \lambda$ for some $\phi : \W \to \R$, we show $\epi \Phi = \Lambda^{-1}(\epi \phi)$. Indeed, we have
	\begin{align*}
		(t, x) \in \epi \Phi 
		& \iff t \geq \Phi(x) \\
            & \iff  t \geq  \phi \big( \lx \big) \\
		& \iff \Lambda \big( (t, x) \big) = \big( t, \lx \big) \in \epi \phi \\
		& \iff (t, x) \in \Lambda^{-1}(\epi \phi).
	\end{align*}
	This shows that $\epi \Phi$ is a spectral set in $(\R \times \V, \R \times \W, \Lambda)$.
	
	For the converse, suppose $\epi \Phi$ is a spectral set in $(\R \times \V, \R \times \W, \Lambda)$ so that  $\epi \Phi = \Lambda^{-1}(Q)$ for some $Q$ in $\R \times \W$. Then, for each $u\in \lambda(\V)$, the set $\{t\in \R: (t,u)\in Q\}$ is bounded below. We now define $\phi : \lambda(\V) \to \R$  by
	\[ \phi(u):= \inf \, \{ t \in \R : (t, u) \in Q \} \]
	and extend it to $\W$ arbitrarily. 
	Then, for $x \in \V$, we let  $u = \lx \in \lambda(\V)$ so that
	\begin{align*}
		\phi(\lx) 
		& = \inf \big\{ t \in \R : \big( t, \lx \big) \in Q \big\} \\
		& = \inf \big\{ t \in \R : \Lambda \big( (t, x) \big) \in Q \big\} \\
		& = \inf \big\{ t \in \R : (t, x) \in \Lambda^{-1}(Q) = \epi \Phi \big\} \\
		& = \Phi(x).
	\end{align*}
	As $x \in \V$ is arbitrary, it follows that $\Phi = \phi \circ \lambda$. \qed
\end{proof}

For ease of reference, we now recall  some definitions, results, and examples from  earlier works.

\begin{definition}[{\bf Commutativity and majorization}]
Let $(\V, \W, \lambda)$ be a FTvN system and $x,y\in \V$. Relative to this system, we say that 
\begin{itemize}
     \item [$(a)$] 
 $x$ and $y$  {\it commute }if
$ \ip{x}{y} = \ip{\lx}{\ly}$ and 
\item [$(b)$]  $x$ is {\it majorized} by $y$ and write $x\prec y$ if $x\in \conv \,[y]$.
\end{itemize}
\end{definition}

\begin{proposition}[\cite{gowda2019optimizing}, Section 2] \label{prop: basic ftvn theorem}
    Let $(\V, \W, \lambda)$ be a FTvN system. Then, the following hold for $x, y, c \in \V$:
	\begin{itemize}
		\item [$(a)$] $\lambda(tx) = t\lx$ for all $t \geq 0$.
		
		\item [$(b)$] $\norm{\lx - \ly} \leq \norm{x-y}$.
		
		\item [$(c)$] $\ip{\lc}{\lambda(x+y)} \leq \ip{\lc}{\lx} + \ip{\lc}{\ly}$. More generally, for $c, x_1, x_2, \ldots, x_k$ in $\V$,
		\begin{equation} \label{general sublinearity}
			\Big\langle \lc, \lambda(x_1 + x_2 + \cdots + x_k) \Big\rangle \leq \Big\langle \lc, \lambda(x_1) + \lambda(x_2) + \cdots + \lambda(x_k) \Big\rangle.
		\end{equation}
	
		\item [$(d)$] $F := \operatorname{ran} \, \lambda$ is a convex cone in $\W$. It is closed if $\V$ is finite dimensional.
		
		\item [$(e)$] The following are equivalent:
		\begin{itemize}
			\item [$(i)$] $x$ and $y$ commute in $(\V, \W, \lambda)$, that is, $\ip{x}{y} = \ip{\lx}{\ly}$.
			\item [$(ii)$] $\lambda(x+y) = \lx + \ly$.
			\item [$(iii)$] $\norm{\lx - \ly} = \norm{x-y}$.
		\end{itemize}
	\end{itemize}
\end{proposition}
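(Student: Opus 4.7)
The plan is to prove the claims in a convenient order: (a), (b), (c), (e), and finally (d), since the convexity part of (d) will depend on (e) and the cone part on (a).

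For (a), I would chain the inequalities
\[
t\norm{x}^2 = \ip{tx}{x} \leq \ip{\lambda(tx)}{\lx} \leq \norm{\lambda(tx)}\,\norm{\lx} = t\norm{x}^2,
\]
using (A2), Cauchy--Schwarz, and (A1); equality throughout forces $\lambda(tx)$ to be a nonnegative scalar multiple of $\lx$, and (A1) pins the scalar as $t$ (with the degenerate cases handled by $\norm{\lambda(0)} = 0$). For (b), I would expand $\norm{x-y}^2 - \norm{\lx - \ly}^2$ and apply (A1) and (A2) to see the result is nonnegative. For (c), I would invoke (A3) to produce $c' \in [c]$ with $\ip{c'}{x+y} = \ip{\lc}{\lambda(x+y)}$, then split and apply (A2) termwise:
\[
\ip{\lc}{\lambda(x+y)} = \ip{c'}{x} + \ip{c'}{y} \leq \ip{\lambda(c')}{\lx} + \ip{\lambda(c')}{\ly} = \ip{\lc}{\lx + \ly};
\]
an induction on $k$ then yields (\ref{general sublinearity}).

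For (e), I would prove the cyclic chain (i) $\Rightarrow$ (ii) $\Rightarrow$ (iii) $\Rightarrow$ (i). The main obstacle is (i) $\Rightarrow$ (ii), where equality of scalars must be upgraded to equality of vectors. Assuming $\ip{x}{y} = \ip{\lx}{\ly}$, the hypothesis together with (A1) gives $\norm{\lx + \ly}^2 = \norm{x+y}^2 = \norm{\lambda(x+y)}^2$, so
\[
\norm{\lambda(x+y) - (\lx + \ly)}^2 = 2\norm{\lambda(x+y)}^2 - 2\ip{\lambda(x+y)}{\lx + \ly}.
\]
Using (A2), $\ip{\lambda(x+y)}{\lx} + \ip{\lambda(x+y)}{\ly} \geq \ip{x+y}{x} + \ip{x+y}{y} = \norm{x+y}^2 = \norm{\lambda(x+y)}^2$, so the left side is $\leq 0$, forcing (ii). The remaining implications (ii) $\Rightarrow$ (iii) and (iii) $\Rightarrow$ (i) then follow from routine expansions of squared norms using (A1).

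Finally for (d), the cone property is immediate from (a). For convexity of $F = \ran\,\lambda$, I would take $u = \lx$ and $v = \ly$ in $F$, apply (A3) to find $x' \in [x]$ with $\ip{y}{x'} = \ip{\ly}{\lambda(x')}$ so that $x'$ and $y$ commute, and then (e) delivers $\lambda(x'+y) = \lambda(x') + \ly = u + v \in F$. For closedness in finite dimensions, (b) makes $\lambda$ Lipschitz, and if $\lambda(x_n) \to u$ then $\norm{x_n} = \norm{\lambda(x_n)}$ is bounded by (A1), so passing to a convergent subsequence $x_{n_k} \to x$ gives $u = \lx \in F$.
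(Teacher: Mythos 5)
Your proposal is correct, and there is nothing in the paper to compare it against: this proposition is stated without proof, being quoted from \cite{gowda2019optimizing}, Section~2. Your derivations are the natural ones from the axioms $(A1)$--$(A3)$ --- in particular, the equality-in-Cauchy--Schwarz argument for $(a)$, the use of $(A3)$ to replace $c$ by a commuting representative $c'\in[c]$ for $(c)$ and for the convexity of $\operatorname{ran}\lambda$ in $(d)$, and the norm-expansion argument forcing $\norm{\lambda(x+y)-(\lx+\ly)}=0$ for $(i)\Rightarrow(ii)$ in $(e)$ --- and they all check out.
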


\begin{proposition}[\cite{gowda2023commutativity}, Proposition 4.2]
    Suppose $(\V,\W,\lambda)$ is a FTvN system. If $E$ is convex and spectral in $\V$, then $\lambda(E)$ is convex in $\W$.
\end{proposition}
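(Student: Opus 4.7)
The plan is to show directly that a convex combination of two points of $\lambda(E)$ lies in $\lambda(E)$. Pick $u, v \in \lambda(E)$ and $t \in [0,1]$, and let $x, y \in E$ with $\lambda(x) = u$ and $\lambda(y) = v$. The obstruction to immediately writing $tu + (1-t)v = \lambda(tx + (1-t)y)$ is that $x$ and $y$ may fail to commute; only \eqref{general sublinearity} is guaranteed in general. So the first order of business is to replace $y$ by a more convenient representative of its orbit.

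To this end I would apply $(A3)$ with $c = x$ and $q = \lambda(y)$ to produce $y' \in \V$ satisfying $\lambda(y') = \lambda(y) = v$ and $\ip{x}{y'} = \ip{\lx}{\ly}$, i.e.\ $x$ and $y'$ commute. Since $y \in E$ and $E$ is spectral, the orbit $[y]$ is contained in $E$; in particular $y' \in E$. Now convexity of $E$ yields $tx + (1-t)y' \in E$.

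Next I would verify that $tx$ and $(1-t)y'$ still commute: by Proposition 2.2$(a)$ and bilinearity,
\[
\ip{tx}{(1-t)y'} \;=\; t(1-t)\ip{x}{y'} \;=\; t(1-t)\ip{\lx}{\lambda(y')} \;=\; \ip{\lambda(tx)}{\lambda((1-t)y')},
\]
using $t,(1-t)\geq 0$. Then Proposition 2.2$(e)$ (equivalence of $(i)$ and $(ii)$) together with homogeneity gives
\[
\lambda\bigl(tx + (1-t)y'\bigr) \;=\; \lambda(tx) + \lambda((1-t)y') \;=\; t\lx + (1-t)\lambda(y') \;=\; tu + (1-t)v.
\]
Thus $tu + (1-t)v \in \lambda(E)$, proving convexity.

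The main obstacle is really just the observation that one cannot use the given pair $(x,y)$ directly, since $\lambda$ is only subadditive in the sense of \eqref{general sublinearity}; the key insight is that $(A3)$ lets us replace $y$ with a commuting representative $y'$ from the same orbit, and that this replacement stays in $E$ precisely because $E$ is spectral. After that, the computation is routine.
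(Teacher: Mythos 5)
Your argument is correct: the paper itself states this proposition without proof (it is recalled from \cite{gowda2023commutativity}, Proposition 4.2), and your derivation is a valid self-contained justification. The key step --- using $(A3)$ to replace $y$ by a commuting representative $y'\in[y]\subseteq E$, then invoking positive homogeneity and Proposition 2.2$(e)$ to get $\lambda(tx+(1-t)y')=tu+(1-t)v$ --- is exactly the standard mechanism used throughout this paper (e.g.\ in the proof of Theorem 3.2$(c)$), so nothing further is needed.
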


Recall that for a set $E$ in $\V$, its \textit{polar} and \textit{dual} are defined respectively by
\[ E^p := \{x \in \V : \ip{x}{y} \leq 0 \text{ for all } y \in E\}, \quad E^* = -E^p. \]

\begin{proposition}[\cite{gowda2023commutativity}, Proposition 4.3] \label{prop: spectral invariance} 
    Let $E$ be a spectral set in a FTvN system $(\V, \W, \lambda)$. Then the following statements hold:
	\begin{itemize}
		\item [$(a)$] $\overline{E}$, $E^\circ$, and $\partial(E)$ are  spectral.
		\item [$(b)$] If $\V$ is a Hilbert space, then $\overline{\conv}\,E$ is a spectral set.
		\item [$(c)$] If $\V$ is finite dimensional, then $\conv E$ is a spectral set.
		\item [$(d)$] If $\V$ is a Hilbert space, then $E^p$ is a spectral set. In particular, if $\V$ is a Hilbert space and $S$ is a spectral set which is also a subspace in $\V$, then, $S^\perp$ is spectral. 
    	\item [$(e)$] If $\V$ is a Hilbert space, then the sum of two compact convex spectral sets in $\V$ is spectral. 
    	\item [$(f)$] If $\V$ is finite dimensional, then the sum of two convex spectral sets is spectral.
	\end{itemize}
\end{proposition}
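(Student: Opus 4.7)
The unifying idea for all six parts is to check that the operation in question preserves the property $x \in E \Rightarrow [x] \subseteq E$. The two workhorse facts I would lean on are the nonexpansivity $\norm{\lx - \ly} \leq \norm{x - y}$ of Proposition 2.2(b) (which makes $\lambda$ continuous) and the attainment clause $(A3)$, which is equivalent to saying that the support function of the orbit $[x]$ equals $\ip{\lc}{\lx}$. From this last identity one also derives $\sigma_E(c) = \sigma_{\lambda(E)}(\lc)$ for any spectral set $E$, which will be the bridge from half-space descriptions in $\V$ to descriptions in $\W$.

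For $(a)$, I would take $x_n \in E$ with $x_n \to x$ and an arbitrary $y \in [x]$, and apply $(A3)$ with test element $y$ and target $\lambda(x_n) \in \lambda(\V)$ to produce $z_n \in [x_n] \subseteq E$ with $\ip{y}{z_n} = \ip{\ly}{\lambda(x_n)}$. Expanding $\norm{z_n - y}^2 = \norm{\lambda(x_n)}^2 + \norm{\ly}^2 - 2\ip{\ly}{\lambda(x_n)} = \norm{\lambda(x_n) - \ly}^2$ and using continuity of $\lambda$ together with $\lx = \ly$ gives $z_n \to y$, hence $y \in \overline{E}$. Since the set-theoretic complement of a spectral set is spectral, $E^\circ = (\overline{E^c})^c$ and $\partial E = \overline{E} \cap \overline{E^c}$ are then spectral as well.

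For the convex-analytic parts $(b)$, $(d)$, $(e)$, I would work with support functions. In $(b)$, the key step is to show $\ip{\lc}{\lx} \leq \sigma_{\lambda(E)}(\lc)$ for every $x \in \overline{\conv}\,E$: on a convex combination $x = \sum \alpha_i e_i$ this drops out of the generalized sublinearity in Proposition 2.2(c), and the closure is handled by continuity of $\lambda$. Then for $y \in [x]$, $\ip{c}{y} \leq \ip{\lc}{\ly} = \ip{\lc}{\lx} \leq \sigma_{\lambda(E)}(\lc) = \sigma_E(c)$ for every $c$, so $y \in \overline{\conv}\,E$. In $(d)$, for $y \in E$ I would use $(A3)$ to pick $y' \in [y] \subseteq E$ realizing $\ip{x}{y'} = \ip{\lx}{\ly}$; since $\ip{x}{y'} \leq 0$, the FTvN inequality gives $\ip{z}{y} \leq \ip{\lz}{\ly} = \ip{\lx}{\ly} \leq 0$ for any $z \in [x]$. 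Part $(e)$ is the same story applied to the sum: for $x = a + b \in E_1 + E_2$, Proposition 2.2(c) yields $\ip{\lc}{\lx} \leq \ip{\lc}{\lambda(a)} + \ip{\lc}{\lambda(b)} \leq \sigma_{E_1}(c) + \sigma_{E_2}(c) = \sigma_{E_1 + E_2}(c)$, and compactness of $E_1 + E_2$ turns this half-space description into membership.

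The real obstacle is parts $(c)$ and $(f)$, where the support-function argument by itself only deposits $y$ into $\overline{\conv}\,E$ or $\overline{E_1 + E_2}$, not into the bare (possibly non-closed) set. In finite dimensions I would try two routes. The first is Carathéodory plus $(A3)$: write $x = \sum_{i=1}^{d+1} \alpha_i e_i$ and try to transport the representation to $y$ by applying $(A3)$ to adjust one summand at a time while keeping the $\alpha_i$'s fixed. The second is a relative-interior argument: in finite dimensions the relative interior of a convex set equals that of its closure, and a version of $(a)$ for relative interiors shows these are spectral; combined with the already spectral $\overline{\conv}\,E$ from $(b)$ (and the analogous statement for sums from $(e)$), one peels off the closure. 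Making either route fully rigorous, and handling the unbounded cases in $(f)$ without compactness, is where I expect the bulk of the work to lie.
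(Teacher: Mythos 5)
First, a point of reference: the paper states this proposition without proof, citing \cite{gowda2023commutativity}, Proposition 4.3, so there is no in-paper argument to compare against; I assess your proposal on its own terms. Your treatments of $(a)$, $(b)$, $(d)$, and $(e)$ are correct. In $(a)$, using $(A3)$ to manufacture $z_n\in[x_n]\subseteq E$ with $\norm{z_n-y}=\norm{\lambda(x_n)-\ly}$ is exactly the right device, and passing to $E^\circ$ and $\partial E$ through complements is fine since complements and intersections of spectral sets are spectral. The identity $\sup_{x\in E}\ip{c}{x}=\sup_{u\in\lambda(E)}\ip{\lc}{u}$ for spectral $E$, combined with the sublinearity in Proposition \ref{prop: basic ftvn theorem}$(c)$ and positive homogeneity of $\lambda$, correctly handles $(b)$ and $(e)$, where closedness of $\overline{\conv}\,E$, respectively compactness of $E_1+E_2$, converts the half-space inequalities into membership; $(d)$ is also right (and, as you run it, does not even use completeness).

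The genuine gap is in $(c)$ and $(f)$, which you leave open, and neither of your proposed repairs works as stated: transporting a Carath\'eodory representation ``keeping the $\alpha_i$'s fixed'' would require $y=\sum_i\alpha_i e_i'$ with $e_i'\in[e_i]$, which there is no reason to expect; and the relative-interior route says nothing about points of $\conv E$ lying on the relative boundary of $\overline{\conv}\,E$. The missing idea is to localize your own support-function argument to a compact convex subset. For $(c)$: write $x=\sum_{i=1}^{k}\alpha_i e_i$ with $e_i\in E$ and set $F:=\conv\big([e_1]\cup\cdots\cup[e_k]\big)$. Each orbit $[e_i]=\lambda^{-1}(\{\lambda(e_i)\})$ is closed and bounded (norm preservation), hence compact when $\V$ is finite dimensional, so $F$ is compact, convex, and contained in $\conv E$ by spectrality of $E$. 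Your chain of inequalities gives, for $y\in[x]$ and every $c$,
\[
\ip{c}{y}\;\le\;\ip{\lc}{\lx}\;\le\;\sum_{i}\alpha_i\ip{\lc}{\lambda(e_i)}\;\le\;\max_i\,\sup_{w\in[e_i]}\ip{c}{w}\;=\;\sup_{w\in F}\ip{c}{w},
\]
and since $F$ is compact convex these inequalities force $y\in F\subseteq\conv E$. For $(f)$, run the same argument against $F:=\conv[a]+\conv[b]$, which is compact convex and contained in $E_1+E_2$ because $E_1$ and $E_2$ are convex and spectral. This is precisely the device the present paper deploys in its Section 3 theorem on $\lambda^{-1}(Q_1+Q_2)$ (the sets $F_i=\lambda^{-1}(\conv[w_i])$ there), so the needed tool is already on display elsewhere in the text.
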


\begin{proposition}[\cite{gowda2023commutativity}, Corollary 5.5] \label{prop: convex hull of a sum}
    Consider a FTvN system $(\V, \W, \lambda)$, where $\V$ is finite dimensional. Then, for all $a,b\in \V$,
    \[ \conv [a+b] \subseteq \conv [a] + \conv [b]. \]
\end{proposition}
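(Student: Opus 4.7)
The plan is to prove the inclusion via support functions. Since $\V$ is finite dimensional, each orbit $[u]$ is compact: property $(A1)$ gives $\|x\| = \|\lambda(x)\| = \|\lambda(u)\|$ for every $x \in [u]$, so $[u]$ is bounded, and the nonexpansiveness estimate in Proposition 2.2$(b)$ shows $\lambda$ is continuous so $[u] = \lambda^{-1}(\{\lambda(u)\})$ is closed. Hence both $\conv[a+b]$ and $\conv[a] + \conv[b]$ are compact convex sets in $\V$, and the inclusion $A \subseteq B$ between closed convex sets is equivalent to $\sigma_A(c) \leq \sigma_B(c)$ for every $c \in \V$, where $\sigma_K(c) = \sup_{x \in K} \ip{c}{x}$.

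The first step is to compute the support function of $\conv[a+b]$. Since the support function of the convex hull equals that of the set itself, and since the defining identity \eqref{eq: intro ftvn} of a FTvN system yields
\[ \sup_{x \in [a+b]} \ip{c}{x} = \ip{\lc}{\lambda(a+b)}, \]
we obtain $\sigma_{\conv[a+b]}(c) = \ip{\lc}{\lambda(a+b)}$. The second step is to compute the support function of the Minkowski sum: using the general identity $\sigma_{A+B} = \sigma_A + \sigma_B$ and applying \eqref{eq: intro ftvn} to each summand gives
\[ \sigma_{\conv[a] + \conv[b]}(c) = \ip{\lc}{\lambda(a)} + \ip{\lc}{\lambda(b)}. \]

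The third step is to invoke the subadditivity inequality of Proposition 2.2$(c)$, which says exactly
\[ \ip{\lc}{\lambda(a+b)} \leq \ip{\lc}{\lambda(a)} + \ip{\lc}{\lambda(b)}. \]
Combining the three displays yields $\sigma_{\conv[a+b]}(c) \leq \sigma_{\conv[a] + \conv[b]}(c)$ for every $c \in \V$, and the support function criterion finishes the proof.

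There is no serious obstacle here; the only care needed is to justify that both sides are closed convex (so that the support function criterion applies), which is where the finite-dimensionality hypothesis is used (to pass from boundedness to compactness and to ensure that the sum of two compact convex sets remains closed). Everything else is a direct assembly of $(A1)$, \eqref{eq: intro ftvn}, and Proposition 2.2$(c)$.
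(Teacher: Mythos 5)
Your argument is correct and self-contained: the orbits are compact in finite dimensions by $(A1)$ and the continuity of $\lambda$, the support functions of $\conv[a+b]$ and $\conv[a]+\conv[b]$ are computed exactly via \eqref{eq: intro ftvn}, and the comparison reduces to the sublinearity inequality of Proposition \ref{prop: basic ftvn theorem}$(c)$; since the target set is compact convex, the support-function criterion legitimately yields the inclusion. Note that the paper itself states this proposition without proof, importing it from \cite{gowda2023commutativity} (Corollary 5.5), so there is no in-paper argument to compare against; your support-function route is the natural one and is essentially how the cited source derives it, via the description of $\conv[x]$ as $\{y:\ip{c}{y}\leq\ip{\lc}{\lx}\ \forall c\}$.
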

We recall the definition of a reduced system.

\begin{definition}[{\bf Reduced system}] \label{def: reduced system}
    Let $(\V,\W,\lambda)$ be a FTvN system. Suppose $(\W,\W,\mu)$ is a FTvN system such that
	\begin{itemize}
		\item [$(C1)$] $\mu \circ \lambda = \lambda$, and
		
		\item [$(C2)$] $\operatorname{ran} \, \mu \subseteq \operatorname{ran} \, \lambda$.
	\end{itemize}
	Then, we will say that $(\W,\W,\mu)$ is a \textit{reduced system} of $(\V, \W, \lambda)$.
\end{definition}

Let $(\W, \W, \mu)$ be a reduced system of a FTvN system $(\V,\W,\lambda)$. For any $w \in \W$, by $(C2)$, we may choose $x\in \V$ such that $\mu(w)=\lx$. Then $(C1)$ gives $\mu \big( \mu(w) \big) = \mu \big( \lx \big) = \lx = \mu(w)$. This implies that $\mu^2 = \mu$ on $\W$. Also, from $(C1)$, $\mbox{ran}\,\mu=\mbox{ran}\,\lambda$. Thus, 
\begin{center}
    {\it when $(\W, \W, \mu)$ is a reduced system of $(\V, \W, \lambda)$, we have $\mu^2 = \mu$ and $\mbox{ran}\,\mu = \mbox{ran}\,\lambda$.
}
\end{center}

\begin{proposition}[\cite{gowda2023commutativity}, Theorem 9.3] \label{prop: Lidskii type theorem} 
	Suppose $(\W, \W, \mu)$ is a reduced system of $(\V, \W, \lambda)$ with $\W$ finite dimensional. Let $F := \operatorname{ran} \, \lambda$ and $F^*$ denote the dual of the cone $F$ in $\W$. Then,
	the following statements hold:
	\begin{itemize}
		\item [$(a)$] If $u, v \in F$ with $u-v \in F^*$, then $v \prec u$ in $\W$.
		
		\item [$(b)$] For $x_1, x_2, \ldots, x_k \in \V$, $\lambda(x_1 + x_2 + \cdots + x_k) \prec \lambda(x_1) + \lambda(x_2) + \cdots + \lambda(x_k)$ in $\W$.
		
		\item [$(c)$] $x \prec y$ in $\V$ implies $\lx \prec \ly$ in $\W$. The converse holds if $\V$ is finite dimensional.
	\end{itemize}
\end{proposition}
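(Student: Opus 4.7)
The plan is to prove the three parts in order: part (a) is the main engine (a separation-plus-FTvN argument), part (b) reduces to (a) via generalized sublinearity, and part (c) combines (b) with a second separation argument carried out in $\V$.

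For (a), I would argue by contradiction. Suppose $v\notin \conv [u]$, where $[u]=\{w\in \W:\mu(w)=\mu(u)\}$ is the $\mu$-orbit of $u$. Since $\mu$ is norm-preserving by $(A1)$ and continuous by Proposition \ref{prop: basic ftvn theorem}(b), $[u]$ is closed and bounded, hence compact in the finite-dimensional space $\W$; therefore $\conv[u]$ is compact, and strict separation yields $c\in \W$ with
\[ \ip{c}{v}>\sup_{z\in [u]}\ip{c}{z}. \]
Applying \eqref{eq: intro ftvn} to the reduced system $(\W,\W,\mu)$, this supremum equals $\ip{\mu(c)}{\mu(u)}$. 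From the remark following Definition \ref{def: reduced system} we have $\mu^{2}=\mu$ and $\operatorname{ran}\mu=F$, so $u,v\in F$ give $\mu(u)=u$ and $\mu(v)=v$. Combining with the FTvN inequality $\ip{c}{v}\leq \ip{\mu(c)}{\mu(v)}=\ip{\mu(c)}{v}$, the displayed inequality collapses to $\ip{\mu(c)}{u-v}<0$, which contradicts $u-v\in F^{*}$ together with $\mu(c)\in F$.

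For (b), set $u:=\lambda(x_{1})+\cdots+\lambda(x_{k})$ and $v:=\lambda(x_{1}+\cdots+x_{k})$. Proposition \ref{prop: basic ftvn theorem}(d) says $F$ is a convex cone, so $u\in F$, and clearly $v\in F$. To invoke (a) it suffices to check $u-v\in F^{*}$: every $f\in F$ has the form $f=\lc$ for some $c\in\V$, and \eqref{general sublinearity} reads exactly $\ip{f}{u-v}\geq 0$. For the forward direction of (c), write $x=\sum t_{i}y_{i}$ with $y_{i}\in [y]$, $t_{i}\geq 0$, $\sum t_{i}=1$; by Proposition \ref{prop: basic ftvn theorem}(a) and part (b), $\lx\prec \sum t_{i}\lambda(y_{i})=\ly$. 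For the converse, assume $\V$ is finite dimensional and $\lx\prec \ly$. If $x\notin \conv [y]$, then (by the same norm-preservation/continuity argument $\conv[y]$ is compact in $\V$) strict separation yields $c\in \V$ with $\ip{c}{x}>\sup_{z\in[y]}\ip{c}{z}=\ip{\lc}{\ly}$ via \eqref{eq: intro ftvn}. The FTvN inequality in $(\V,\W,\lambda)$ gives $\ip{\lc}{\lx}\geq \ip{c}{x}>\ip{\lc}{\ly}$. On the other hand, writing $\lx=\sum t_{i}w_{i}$ with $\mu(w_{i})=\mu(\ly)=\ly$ (using $\mu\circ\lambda=\lambda$) and applying the FTvN inequality in $(\W,\W,\mu)$ term by term, $\ip{\lc}{w_{i}}\leq \ip{\mu(\lc)}{\mu(w_{i})}=\ip{\lc}{\ly}$ for every $i$, forcing $\ip{\lc}{\lx}\leq \ip{\lc}{\ly}$: contradiction.

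The main obstacle is part (a), where both FTvN systems must appear in the same chain of inequalities and one must recognize $\mu$ as the identity on $F$. Once (a) is in hand, (b) is a one-line cone-duality check and the forward direction of (c) follows from positive homogeneity. A secondary bookkeeping point is that the orbits $[u]$ in $\W$ and $[y]$ in $\V$ must be compact in order for strict separation to apply, which is precisely where finite-dimensionality of $\W$ (in (a), (b)) and of $\V$ (in the converse of (c)) is used.
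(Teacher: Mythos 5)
Your argument is correct, and every step is justifiable from material actually stated in this paper: part (a) via strict separation of $v$ from the compact convex set $\conv[u]$, the identity \eqref{eq: intro ftvn} applied in the reduced system $(\W,\W,\mu)$, and the facts $\mu^{2}=\mu$, $\operatorname{ran}\mu=F$ (so that $\mu$ fixes $u$, $v$, and sends $c$ into $F$); part (b) by reading \eqref{general sublinearity} as the statement $u-v\in F^{*}$; and part (c) by positive homogeneity plus (b) in one direction and a second separation argument in $\V$ in the other. One caveat on the comparison you asked for: the paper does not prove this proposition --- it is recalled verbatim from \cite{gowda2023commutativity}, Theorem 9.3 --- so there is no in-paper proof to measure your route against; judged on its own, your reconstruction is complete, with the only points deserving explicit mention being the ones you already flag (compactness of the orbits, needed for strict separation, which is where finite-dimensionality of $\W$ and, in the converse of (c), of $\V$ enters).
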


We now present some examples of FTvN systems with their corresponding reduced systems. For more examples, we refer to \cite{gowda2023commutativity}.

\begin{example}
	Consider the FTvN system $(\V, \R, \lambda)$, where $\V$ is an inner product space and $\lx := \norm{x}$ for all $x \in \V$. Then, $(\R, \R, \mu)$ with $\mu(r) = |r|$ for $r \in \R$, is a reduced system of $(\V, \R, \lambda)$. However, the FTvN system $(\R, \R, \nu)$ with $\nu(r) = r$ is {\it not} a reduced system of $(\V, \R, \lambda)$ because condition $(C2)$ in the above definition fails to hold. 
\end{example}

\begin{example}
	Suppose $\V$ is a Euclidean Jordan algebra of rank $n$ (with the trace inner product). Then, $(\Rn, \Rn, \mu)$ with $\mu(q) = q^\downarrow$ on $\Rn$ is a reduced system of the FTvN system $(\V, \Rn, \lambda)$, where $\lx$ is the eigenvalue vector of $x \in \V$. We note that in the FTvN system $(\Rn, \Rn, \mu)$ a set is spectral if and only if it is invariant under permutation matrices. Such sets are traditionally called {\it symmetric} sets.
\end{example}

\begin{example}
	Suppose $(\W, \W, \mu)$ is an FTvN system with $\mu^2 = \mu$. Then, $(\W, \W, \mu)$ is a reduced system of itself. In particular, every normal decomposition system has this property. (See the Appendix in \cite{gowda2023commutativity} for the definition of a normal decomposition system.)
\end{example}

\begin{example} \label{eg: product FTvN system}
    Consider two FTvN systems $(\V_1, \W_1, \lambda_1)$ and $(\V_2, \W_2, \lambda_2)$ with their corresponding reduced systems $(\W_1, \W_1, \mu_1)$ and $(\W_2, \W_2, \mu_2)$. Then, see Example 2.14 in \cite{gowda2023commutativity}, their Cartesian product can be made into a FTvN system $(\V_1 \times \V_2, \W_1 \times \W_2, \Lambda)$, where
	\[ \Lambda \big( (x_1, x_2) \big) = \big( \lambda_1(x_1), \lambda_2(x_2) \big) \;\; \text{for} \;\; (x_1, x_2) \in \V_1 \times \V_2. \]
	In this case, the triple $(\W_1 \times \W_2, \W_1 \times \W_2, M)$, with 
	\[ M \big( (u_1, u_2) \big) = \big( \mu_1(u_1), \mu_2(u_2) \big) \;\; \text{for} \;\; (u_1, u_2) \in \W_1 \times \W_2 \]
	becomes the corresponding reduced system. Indeed, since $\operatorname{ran} \mu_i \subseteq \operatorname{ran} \lambda_i$ for $i = 1, 2$, it is easy to see that $\operatorname{ran} M \subseteq \operatorname{ran} \Lambda$, proving $(C1)$. Also, using the fact that $\mu_i \circ \lambda_i = \lambda_i$ for $i = 1, 2$, we have
	\begin{align*}
	    M \big( \Lambda(x_1, x_2) \big) 
     &= M \big( \lambda(x_1), \lambda(x_2) \big)
     = \big( \mu(\lambda(x_1)), \mu(\lambda(x_2)) \big) \\
     &= \big( \lambda(x_1), \lambda(x_2) \big) 
     = \Lambda \big( (x_1, x_2) \big),
	\end{align*}
	for any $(x_1, x_2) \in \V_1 \times \V_2$. Hence $M \circ \Lambda = \Lambda$, justifying $(C2)$. 
	
	In particular, for a FTvN system $(\V, \W, \lambda)$ with its reduced system $(\W, \W, \mu)$, the triple $(\R \times \V, \R \times \W, \Lambda)$, where $\Lambda(t, x) = \big( t, \lx \big)$, is a FTvN system and $(\R \times \W, \R \times \W, M)$ with $M(t, u) = \big( t, \mu(u) \big)$ is the corresponding reduced system.
	
	As an illustrative example, consider the triple $(\R^{n+1}, \R^2, \lambda)$ with
	\[ \lambda \big( (t, x) \big) = \big( t, \norm{x}_2 \big) \;\; \text{for} \;\; (t, x) \in \R \times \R^n. \]
	It is easy to see that it is the Cartesian product of two FTvN systems $(\R, \R, \operatorname{Id})$ and $(\R^n, \R, \norm{\cdot}_2)$; hence a FTvN system. Here, $(\R^2, \R^2, \mu)$ with $\mu \big( (t, s) \big) = (t, \abs{s})$ is the reduced system. Now, corresponding to the spectral set $Q = \big\{ (t, s) \in \R^2 : t \geq \abs{s} \big\}$ in $(\R^2, \R^2, \mu)$, we get the second-order cone
	\[\lambda^{-1}(Q) = \big\{ (t, x) \in \R \times \R^n : t \geq \norm{x}_2 \big\}. \]
 
\end{example}

%%%%%%%%%%%%%%%%%%%%%%%%%%%%%%%%%%%%%%%%%%%%%%%%%%%%%%%%%%%%%%%%%%%%%%%
\section{Transfer principles}

Consider a FTvN system $(\V, \W, \lambda)$. Recall that a set in $\V$ is a spectral set if it is of the form $\lambda^{-1}(Q)$ for some $Q \subseteq \W$. Also, a function $\Phi : \V \to \R$ is a spectral function if it is of the form $\phi \circ \lambda$ for some function $\phi : \lambda(\V) \to \R$ (without loss of generality, we may let $\phi : \W \to \R$). Motivated by various transfer principles in the setting of Euclidean Jordan algebras and normal decomposition systems \cite{jeong2016spectral,jeong2017spectral,lewis1996convex}, we a raise basic question:
{\it Which topological/convexity properties of $Q$ and $\phi$ are (respectively) carried over to $E = \lambda^{-1}(Q)$ and $\Phi = \phi \circ \lambda$?}
In this section, we formulate several results addressing this and related questions.

\begin{proposition} \label{prop: basic prop}
    Let $(\V, \W, \lambda)$ be a FTvN system, $Q\subseteq \W$, and $E:=\lambda^{-1}(Q)$. Then the following statements hold.
    \begin{itemize}
        \item [$(a)$]
    If $Q$ is open (closed) in $\W$, then $E$ is open (respectively, closed) in $\V$.
    \item [$(b)$] If $Q$ is compact in $\W$ and $\V$ is finite dimensional, then $E$ is compact in $\V$.
    \item [$(c)$] If $\phi : \W \to \R$ is continuous, then $\Phi = \phi \circ \lambda$ is continuous.
    \end{itemize}
\end{proposition}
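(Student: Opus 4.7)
The plan is to exploit the non-expansiveness of $\lambda$ established in Proposition 2.2(b), namely $\norm{\lx - \ly} \leq \norm{x - y}$, which makes $\lambda : \V \to \W$ Lipschitz continuous (hence continuous) regardless of the dimensions of $\V$ and $\W$. Once continuity of $\lambda$ is in hand, all three parts reduce to standard topological facts.

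For part $(a)$, I would simply observe that since $\lambda$ is continuous, $E = \lambda^{-1}(Q)$ is the preimage of an open (respectively, closed) set under a continuous map, hence open (respectively, closed) in $\V$.

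For part $(b)$, since $Q$ is compact it is in particular closed and bounded. By part $(a)$, $E = \lambda^{-1}(Q)$ is closed in $\V$. To obtain boundedness, the key observation is condition $(A1)$ in Definition 2.1: for every $x \in E$ one has $\norm{x} = \norm{\lx}$, and since $\lx \in Q$ and $Q$ is bounded, the quantities $\norm{\lx}$ are uniformly bounded over $x \in E$. Thus $E$ is both closed and bounded in $\V$; assuming $\V$ is finite dimensional, the Heine-Borel theorem yields compactness of $E$.

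For part $(c)$, the function $\Phi = \phi \circ \lambda$ is a composition of the continuous map $\lambda : \V \to \W$ with the continuous function $\phi : \W \to \R$, and is therefore continuous.

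None of the three parts involves a real obstacle: the only point worth noting is that condition $(A1)$ is essential for part $(b)$, since without the norm-preserving property of $\lambda$ boundedness of $Q$ would not transfer to boundedness of $E$. Finite dimensionality of $\V$ is likewise needed only in $(b)$, to pass from closed-and-bounded to compact.
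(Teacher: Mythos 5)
Your proposal is correct and follows essentially the same route as the paper's (much terser) proof: continuity of $\lambda$ from the non-expansiveness in Proposition \ref{prop: basic ftvn theorem}$(b)$ handles $(a)$ and $(c)$, while the norm-preserving property $(A1)$ together with closedness and the Heine--Borel theorem in the finite-dimensional space $\V$ gives $(b)$. No gaps.
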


\begin{proof}
	The first statement follows from the continuity of $\lambda$. The second one follows from the norm-preserving property of $\lambda$ (that $\norm{\lx} = \norm{x}$ for all $x$) and the finite dimensionality of $\V$. Finally, the continuity of $\Phi$ comes from the continuity of $\phi$ and $\lambda$. \qed
\end{proof}

\begin{corollary}
    Suppose $(\V,\W,\lambda)$ be a FTvN system, where $\V$ is finite dimensional. Consider a set  $S$ in $\V$ with its spectral hull $[S]$ and spectral core $\core{S}$. Then the following statements hold.
    \begin{itemize}
        \item [$(i)$] If $S$ is closed, then $\lambda(S)$ is closed in $\W$.
        \item [$(ii)$] If $S$ is closed, then  $[S]$ is closed.
        \item [$(iii)$] If $S$ is open, then $\core{S}$ is open.
        \item [$(iv)$] If $S$ is compact, then $[S]$ is compact.
    \end{itemize}
\end{corollary}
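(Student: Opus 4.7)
The plan is to establish $(i)$ first, derive $(ii)$ from it via Proposition~3.1$(a)$, obtain $(iii)$ by complementation from $(ii)$, and handle $(iv)$ by combining $(ii)$ with a boundedness argument using the norm-preserving property of $\lambda$.

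For $(i)$, I would take a sequence $\{x_n\} \subseteq S$ with $\lambda(x_n) \to u$ in $\W$, and show $u \in \lambda(S)$. Since $\lambda(x_n)$ is convergent it is bounded, and by axiom $(A1)$ we have $\norm{x_n} = \norm{\lambda(x_n)}$, so $\{x_n\}$ is bounded in $\V$. Finite dimensionality of $\V$ then yields a subsequence $x_{n_k} \to x \in \V$; closedness of $S$ forces $x \in S$, and continuity of $\lambda$ (which follows from $(A1)$ and $(A2)$, as recorded in Proposition~2.2$(b)$) gives $\lambda(x_{n_k}) \to \lambda(x)$, so $u = \lambda(x) \in \lambda(S)$.

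For $(ii)$, I would simply observe that $[S] = \lambda^{-1}\big(\lambda(S)\big)$ since a $\lambda$-orbit is the $\lambda$-preimage of a single point. By $(i)$, $\lambda(S)$ is closed in $\W$, so Proposition~3.1$(a)$ applied to the spectral set $[S]$ gives its closedness in $\V$. For $(iii)$, if $S$ is open, then $S^c$ is closed, so $[S^c]$ is closed by $(ii)$, whence $\core{S} = [S^c]^c$ is open. For $(iv)$, if $S$ is compact, then $S$ is closed, so $[S]$ is closed by $(ii)$; furthermore $\lambda(S)$ is compact (being the continuous image of a compact set), hence norm-bounded, and any $y \in [S]$ satisfies $\lambda(y) = \lambda(x)$ for some $x \in S$, so $\norm{y} = \norm{\lambda(y)} = \norm{\lambda(x)}$ by $(A1)$, which is uniformly bounded; thus $[S]$ is closed and bounded in the finite-dimensional space $\V$, hence compact.

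The main obstacle is step $(i)$: without finite dimensionality of $\V$, one cannot extract the convergent subsequence from the norm-bounded sequence $\{x_n\}$, so closedness of $\lambda(S)$ could fail in general. Once $(i)$ is in hand, the remaining parts reduce to routine manipulations of $[\,\cdot\,]$, $\core{\cdot}$, and complementation, together with the norm-preservation identity to transfer boundedness between $S$ and $[S]$.
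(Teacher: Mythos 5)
Your proposal is correct and follows essentially the same route as the paper: the sequential compactness argument for $(i)$ using norm-preservation and finite dimensionality, $(ii)$ via $[S]=\lambda^{-1}\bigl(\lambda(S)\bigr)$ and the continuity of $\lambda$, $(iii)$ by complementation, and $(iv)$ by compactness of $\lambda(S)$. Your closed-plus-bounded argument for $(iv)$ merely unfolds the paper's citation of its Proposition on preimages of compact sets, whose proof is the same boundedness observation.
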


\begin{proof}
$(i)$ Assume that $S$  is closed. Consider a sequence $(u_k)$ in $\lambda(S)$ with $u_k \to u \in \W$. Let $u_k = \lambda(x_k)$ with $x_k \in S$ for each $k$. As $\lambda$ is  norm-preserving and $(u_k)$ is bounded, we see that $(x_k)$ is also bounded. Since $\V$ is finite dimensional, without loss of generality, we may assume $x_k \to x$ for some $x \in S$. Then, by the continuity of $\lambda$, we have $u_k = \lambda(x_k) \to \lambda(x) \in \lambda(S)$. Thus, $u \in \lambda(S)$, proving the closedness of $\lambda(S)$.

\smallgap

$(ii)$ From the above item, $\lambda(S)$ is closed in $\W$. Then Proposition \ref{prop: basic prop} implies that $[S] = \lambda^{-1} \big( \lambda(S) \big)$ is also closed.

\smallgap

$(iii)$ Suppose $S$ is open. As $S^c$ is closed, by the above item, we see that $[S^c]$ is closed; hence $[S^c]^{c}$ is open. Since, by definition, $\core{S} = [S^c]^c$, we get the stated assertion.

\smallgap

$(iv)$ Now suppose that $S$ is compact. Then, by the continuity of $\lambda$, $\lambda(S)$ is compact in $\W$. Then, from the above Proposition \ref{prop: basic prop}$(c)$, we see that $[S] = \lambda^{-1}\big( \lambda(S) \big)$ is also compact. \qed
\end{proof}

\begin{remark} 
In general, the connectedness/convexity properties of a set need not be carried over to its spectral hull (inverse image). For example, in the FTvN system $(\R^2, \R^2, \mu)$ with $\mu(u) = u^\downarrow$, one can take a connected convex set $S = \{(2, 1)\}$ and see that $[S] = \mu^{-1}(S) = \{(1,2), (2,1)\}$, which is neither connected nor convex. However, in certain settings, we can show that if $S$ is open (connected, arcwise connected), then $[S]$ is open (respectively, connected, arcwise connected). For example, consider the system $(\V, \Rn, \lambda)$, where $\V$ is a simple Euclidean Jordan algebra of rank $n$ that carries the trace inner product. Then, any Jordan frame in $\V$ can be mapped onto any other by an automorphism of $\V$. Specifically, for any $u \in \V$,
\[ [u] = \{Au: A \in G \}, \]
where $G$ is the connected component of the identity transformation in the automorphism group of $\V$ (\cite{gowda2018connectedness}, Proposition 2.2). So, $[S] = \bigcup_{A\in G} A(S)$. Now, when $S$ is open, each $A(S)$ is open and so $[S]$, being the union of open sets, is also open. Now, suppose $S$ is connected. Then, $[S]$ is the union of $S$ and connected sets $[s]$ with $s$ varying over $S$. Since $[s] \cap S \neq \emptyset$ for every $s \in S$, we see that the above union is also connected. Thus, $[S]$ is connected. Finally, when $S$ is arcwise connected, we apply Theorem 3.1 in \cite{gowda2018connectedness} to see that $[s]$ is arcwise connected for every $s \in S$ and that $[S]$ is also arcwise connected. 
\end{remark}

The following example shows that, generally, the closure, interior, and convexity properties do not behave well under inverse images.

\begin{example} \label{eg: no spectrality}
Consider the FTvN system $(\R^2,\R,\lambda)$, where $\lx := \norm{x}$ for all $x\in \R^2$. Then the following are easy to verify:
\begin{itemize}
    \item [$(i)$] The interval $Q=(-1,0]$ is not closed in $\R$, but the set $\lambda^{-1}(Q)$ is closed in $\R^2$. Similarly, the interval $Q = [0, 1)$ is not open in $\R$, while $\lambda^{-1}(Q)$ is open in $\R^2$. 
    
    \item [$(ii)$] For the interval $Q = (-1, 0)$ in $\R$, we have $\overline{\lambda^{-1}(Q)} \neq \lambda^{-1}(\, \overline{Q}\, )$.
    
    \item [$(iii)$] For the interval $Q = [0, 1]$ in $\R$, we have $\lambda^{-1}(Q)^{\,\circ} \neq \lambda^{-1}(Q^{\circ})$, $\lambda^{-1}\big( \partial(Q) \big) \neq \partial \big( \lambda^{-1}(Q) \big)$, and $\lambda^{-1}\big( \ext(Q) \big) \neq \ext \big( \lambda^{-1}(Q) \big)$.
    
    \item [$(iv)$] For the compact convex set $Q = \{1\}$ in $\R$, $\lambda^{-1}(Q)$ is not convex in $\R^2$ and $\conv \lambda^{-1}(Q) \neq \lambda^{-1}(\conv Q)$.
\end{itemize}
\end{example}

Note that in the above example, $(\R, \R, \mu)$ with $\mu(r) = \abs{r}$ is a reduced system of $(\R^2, \R, \lambda)$, but the considered sets are not spectral in $(\R, \R, \mu)$. As we see below, positive results are obtained when one works with spectral sets in a reduced system.

\smallgap

Let  $(\W, \W, \mu)$ be a reduced system of $(\V, \W, \lambda)$. {\it Recall that a set in $\W$ is spectral if it is so in $(\W, \W, \mu)$}. The $\mu$-orbit of an element in $\W$ is denoted by the same bracket notation that we use in $\V$.

\smallgap

As a prelude to our positive results, we present a  technical result. 

\begin{proposition} \label{prop: description of spectrality}
Suppose $(\W, \W, \mu)$ is a reduced system of $(\V, \W, \lambda)$. Then the following statements hold:
\begin{itemize}
    \item [$(a)$] If $E = \lambda^{-1}(Q)$ with $Q \subseteq \W$, then $\lambda(E) = Q \cap \mu(Q)$. If $Q$ is spectral in $\W$, then $\mu(Q) \subseteq Q$ and $\lambda(E) = \mu(Q)$.
    
    \item [$(b)$] Every spectral set in $\V$ can be written as the $\lambda$-inverse image of a spectral set in $\W$. In fact, for any $Q \subseteq \W$, the set $\widetilde{Q} := [Q \cap \mu(Q)]$ is spectral in $\W$ and
    \[ \lambda^{-1}(Q) = \lambda^{-1}(\widetilde{Q} ). \]
    
    \item [$(c)$] If $Q$ is spectral in $\W$, then $Q = [\mu(Q)]$ and
    \[ \lambda^{-1}(Q) = \lambda^{-1} \big( \mu(Q) \big). \]

    \item [$(d)$] Every spectral function on $\V$ can be written as the composition of a spectral function on $\W$ and $\lambda$. 
\end{itemize}
\end{proposition}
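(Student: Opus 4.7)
The unifying observation I would build everything on is the fixed-point identity $\mu(u) = u$ for every $u \in \operatorname{ran}\lambda$. Indeed, combining $(C1)$ with $\mu^2 = \mu$ gives $\mu(\lambda(x)) = \lambda(x)$, and $(C2)$ together with $(C1)$ yields $\operatorname{ran}\mu = \operatorname{ran}\lambda$, so every element of this common range is fixed by $\mu$. The four parts then unfold as routine applications of this single fact.

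For $(a)$, the inclusion $\lambda(E) \subseteq Q \cap \mu(Q)$ is immediate: any $u = \lambda(x)$ with $x \in E$ lies in $Q$ by definition of $E$, and $u = \mu(u) \in \mu(Q)$ by the fixed-point identity. Conversely, any $u \in Q \cap \mu(Q)$ lies in $\operatorname{ran}\mu = \operatorname{ran}\lambda$, so $u = \lambda(x)$ for some $x$, and $\lambda(x) = u \in Q$ puts $x \in E$. When $Q$ is spectral in $\W$, I would observe that $q$ and $\mu(q)$ share the same $\mu$-orbit (because $\mu^2 = \mu$), so $\mu(q) \in [q] \subseteq Q$; hence $\mu(Q) \subseteq Q$ and the intersection collapses to $\mu(Q)$.

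Part $(b)$ is then largely mechanical: $\widetilde{Q} := [Q \cap \mu(Q)]$ is spectral in $(\W,\W,\mu)$ by construction, and the inclusion $\lambda^{-1}(Q) \subseteq \lambda^{-1}(\widetilde{Q})$ follows from $(a)$ via $\lambda(E) = Q \cap \mu(Q) \subseteq \widetilde{Q}$. For the reverse inclusion, if $\lambda(x) \in \widetilde{Q}$ I would pick $q \in Q \cap \mu(Q)$ with $\mu(\lambda(x)) = \mu(q)$; the left side equals $\lambda(x)$ by $(C1)$, and writing $q = \mu(q')$ for some $q' \in Q$ and using $\mu^2 = \mu$ forces $\mu(q) = q$, so $\lambda(x) = q \in Q$. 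Part $(c)$ runs in parallel: $(a)$ gives $\mu(Q) \subseteq Q$, hence $[\mu(Q)] \subseteq [Q] = Q$, while the reverse containment uses again that $q \in [\mu(q)] \subseteq [\mu(Q)]$. The identity $\lambda^{-1}(Q) = \lambda^{-1}(\mu(Q))$ then splits naturally into two directions: $\lambda(x) \in Q$ implies $\lambda(x) = \mu(\lambda(x)) \in \mu(Q)$ by the fixed-point identity, while $\lambda(x) \in \mu(Q) \subseteq Q$ is immediate from spectrality.

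For $(d)$, given a spectral function $\Phi = \phi \circ \lambda$ on $\V$, I would simply set $\psi := \phi \circ \mu$; this is spectral in $(\W, \W, \mu)$ by construction, and $\psi \circ \lambda = \phi \circ \mu \circ \lambda = \phi \circ \lambda = \Phi$ by $(C1)$. There is no genuine obstacle in any of this: the whole proposition is a careful unpacking of the reduced-system axioms, and the only step that asks for any vigilance is the one in $(b)$ (and implicitly in $(c)$) where one pulls an element $\mu(q)$ back into $Q$ using $\mu(q) = q$ for $q \in \mu(Q)$; this is the linchpin of the argument.
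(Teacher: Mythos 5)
Your argument is correct and follows essentially the same route as the paper: the paper likewise reduces everything to the facts that $\mu\circ\lambda=\lambda$, $\mu^2=\mu$, $\operatorname{ran}\mu=\operatorname{ran}\lambda$, and that $\mu(q)=q$ for $q\in\mu(Q)$, and it uses the identical construction $\widetilde{\smash[t]\phi}=\phi\circ\mu$ in part $(d)$. The only cosmetic point is that $\mu(\lambda(x))=\lambda(x)$ already follows from $(C1)$ alone, without invoking $\mu^2=\mu$.
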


\begin{proof}
$(a)$ Consider $E = \lambda^{-1}(Q)$, where $Q \subseteq \W$. For any $x \in E$, $q := \lx \in Q$; so, by $(C1)$ (of Definition \ref{def: reduced system}), $\lx = \mu \big( \lx \big) = \mu(q) \in \mu(Q)$. Hence, $\lambda(E) \subseteq Q \cap \mu(Q)$. To see the reverse inclusion, let $p \in Q \cap \mu(Q)$. Then, $p \in Q$ and $p = \mu(q)$ for some $q \in Q$. By $(C2)$, we can write $p = \mu(q) = \lx$ for some $x \in \V$. Clearly, $x \in E$ and $p \in \lambda(E)$. Hence $\lambda(E) = Q \cap \mu(Q)$.

Now suppose $Q$ is spectral in $\W$. Since $\mu^2 = \mu$, for any $u \in Q$, we have $\mu(\mu(u)) = \mu(u)$. So, $\mu(u)$ is in the $\mu$-orbit of $u$. As $Q$ is a spectral set in $\W$, we have $\mu(u) \in Q$. This proves that $\mu(Q) \subseteq Q$. Then the equality $\lambda(E) = Q \cap \mu(Q) = \mu(Q)$ follows.

\smallgap

$(b)$ Consider a spectral set $E = \lambda^{-1}(Q)$ with $Q \subseteq \W$. Clearly, $\lambda^{-1} \big( Q \cap \mu(Q) \big) \subseteq \lambda^{-1}(Q)$. On the other hand, if $x \in \lambda^{-1}(Q) = E$, then $\lx \in \lambda(E) = Q \cap \mu(Q)$ by Item $(a)$. Hence,  $\lambda^{-1}(Q) \subseteq \lambda^{-1} \big( Q \cap \mu(Q) \big)$. Thus,
\[ \lambda^{-1}(Q) = \lambda^{-1} \big( Q \cap \mu(Q) \big). \]

We now claim
\[ \lambda^{-1}(Q \cap \mu(Q)) = \lambda^{-1} \big( [Q \cap \mu(Q)] \big). \]
Let $y \in \lambda^{-1} \big( [Q \cap \mu(Q)] \big)$ so that $\ly = p \in [q]$ for some $q \in Q \cap \mu(Q)$. Then, from $(C1)$, $\ly = \mu \big( \ly \big) = \mu(p) = \mu(q)$. However, $q = \mu(r)$ for some $r \in Q$ and so, $\mu(q) = \mu^2(r) = \mu(r) = q$. It follows that $\ly = \mu(q) = q \in Q \cap \mu(Q)$. Hence,
\[ \lambda^{-1} \big( [Q \cap \mu(Q)] \big) \subseteq \lambda^{-1} \big( Q \cap \mu(Q) \big). \]
Since the reverse inclusion is obvious, we see that $E=\lambda^{-1}(Q) = \lambda^{-1}(\widetilde{Q} )$, where $\widetilde{Q}:=[Q\cap \mu(Q)]$ is spectral in $\W$. 

\smallgap

$(c)$ Suppose $Q$ is spectral in $\W$. By $(a)$, $\mu(Q) \subseteq Q$. Then, $[\mu(Q)] \subseteq [Q] = Q$. As observed before, for every $u \in Q$,  $u$ and $\mu(u)$ lie in the same $\mu$-orbit; hence, $Q \subseteq [\mu(Q)]$. Thus, $Q = [\mu(Q)]$. \\
 
Now we show that $\lambda^{-1}(Q) = \lambda^{-1} \big( \mu(Q) \big)$. From $\mu(Q) \subseteq Q$, we have $\lambda^{-1}(\mu(Q)) \subseteq \lambda^{-1}(Q)$. To see the reverse inclusion, let $x \in \lambda^{-1}(Q)$ so that $\lx \in Q$. Then, $\lx = \mu \big( \lx \big) \in \mu(Q)$. This proves that $x \in \lambda^{-1} \big( \mu(Q) \big)$. Thus, we have $\lambda^{-1}(Q) = \lambda^{-1} \big( \mu(Q) \big)$.

\smallgap

$(d)$ Consider a spectral function $\Phi$ on $\V$ that is written as $\phi \circ \lambda$  for some $\phi : \W \to \R$. Define $\widetilde{\smash[t]\phi} : \W \to \R$ by $\widetilde{\smash[t]\phi}(u) := \phi \big( \mu(u) \big)$ for any $u \in \W$. Clearly, $\widetilde{\smash[t]\phi}$ is constant on the $\mu$-orbits, hence a spectral function on $\W$. Additionally, if $u = \lx$ for some $x \in \V$, then 
\[ \widetilde{\smash[t]\phi} \big( \lx \big) = \phi \big( \mu \big( \lx \big) \big) = \phi \big( \lx \big) = \Phi(x), \]
where we have used condition $(C1)$ in Definition \ref{def: reduced system}. Hence $\Phi = \widetilde{\smash[t]\phi} \circ \lambda$, where $\widetilde{\smash[t]\phi}$ is a spectral function on $\W$. \qed
\end{proof}

We now come to first of several key results of this section. For results of this type in the settings of Euclidean Jordan algebras and normal decomposition systems, see \cite{jeong2017spectral,lewis1996convex}.

\begin{theorem} \label{thm: transfer principle1}
    Suppose $(\V,\, \W,\, \lambda)$ is a FTvN system with its reduced system $(\W,\, \W,\, \mu)$. Let $E = \lambda^{-1}(Q)$, where $Q$ is a spectral set in $\W$. Then
    \begin{itemize}
        \item[$(a)$] $\overline{E} = \overline{\lambda^{-1}(Q)} = \lambda^{-1}(\, \overline{Q} \,)$.
        
        \item[$(b)$] $E^{\circ} = \lambda^{-1}(Q)^{\,\circ} = \lambda^{-1}(Q^{\circ})$.
        
        \item [$(c)$] $\partial(E)=\lambda^{-1}\big( \partial (Q) \big).$
    \end{itemize}
\end{theorem}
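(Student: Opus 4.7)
The strategy is a lifting argument: convert sequences in $\W$ converging to $\lambda(x)$ into sequences in $\V$ converging to $x$, by invoking $(A3)$ to produce preimages that commute with $x$. The ``easy'' inclusions in $(a)$ and $(b)$ come for free from Proposition~\ref{prop: basic prop}: $\lambda^{-1}(\overline{Q})$ is closed and contains $E$, so it contains $\overline{E}$; and $\lambda^{-1}(Q^{\circ})$ is open and contained in $E$, so it sits inside $E^{\circ}$. Part $(c)$ will then drop out of $(a)$ and $(b)$ via $\partial(E) = \overline{E}\setminus E^{\circ}$ and the identity $\lambda^{-1}(A)\setminus \lambda^{-1}(B) = \lambda^{-1}(A\setminus B)$.

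For the hard inclusion in $(a)$, I would fix $x$ with $\lambda(x)\in\overline{Q}$ and pick $q_k\in Q$ with $q_k\to\lambda(x)$. Since $Q$ is spectral in $(\W,\W,\mu)$, Proposition~\ref{prop: description of spectrality}$(a)$ gives $\mu(q_k)\in\mu(Q)\subseteq Q$, and $\mu(q_k)\in\operatorname{ran}\mu = \operatorname{ran}\lambda$. Now apply $(A3)$ with $q = \mu(q_k)$ and $c = x$ to produce $x_k\in\V$ with $\lambda(x_k)=\mu(q_k)$ and $\ip{x}{x_k} = \ip{\lambda(x)}{\lambda(x_k)}$; by Proposition~\ref{prop: basic ftvn theorem}$(e)$, $\norm{x-x_k} = \norm{\lambda(x) - \mu(q_k)}$. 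Because $\mu$ is $1$-Lipschitz (Proposition~\ref{prop: basic ftvn theorem}$(b)$ applied inside the reduced system) and $\mu(\lambda(x)) = \lambda(x)$, the right-hand side tends to $0$, so $x_k\to x$ with $x_k\in E$, placing $x$ in $\overline{E}$.

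For the hard inclusion in $(b)$, I would argue by contrapositive. If $\lambda(x)\notin Q^{\circ}$, choose $q_k\notin Q$ with $q_k\to\lambda(x)$. Because $\mu^2 = \mu$, the elements $q_k$ and $\mu(q_k)$ share a $\mu$-orbit, so spectrality of $Q$ forces $\mu(q_k)\notin Q$ (otherwise $q_k$ would also lie in $Q$). The same lifting via $(A3)$ then produces $x_k\to x$ with $\lambda(x_k) = \mu(q_k)\notin Q$, so $x_k\notin E$; hence $x\notin E^{\circ}$.

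The main obstacle, where all the hypotheses do real work, is the lifting step. Without the reduced system, $\mu(q_k)$ might fail to lie in $\operatorname{ran}\lambda$ (so $(A3)$ is inapplicable) or to remain in $Q$; without $(A3)$ one cannot arrange commutativity; and without commutativity the Lipschitz bound of Proposition~\ref{prop: basic ftvn theorem}$(b)$ yields only $\norm{\lambda(x)-\lambda(x_k)}\le\norm{x - x_k}$, the wrong direction to promote convergence from $\W$ to $\V$. Example~\ref{eg: no spectrality} confirms that spectrality in the reduced system really is needed.
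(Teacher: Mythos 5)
Your proof is correct and follows essentially the same route as the paper: the same $(A3)$-based lifting (producing $x_k$ with $\lambda(x_k)=\mu(q_k)$ commuting with $x$, then using $\norm{x-x_k}=\norm{\lambda(x)-\mu(q_k)}\to 0$) for the hard inclusion in $(a)$, and the same reduction of $(c)$ to $(a)$ and $(b)$. The only cosmetic difference is in $(b)$: the paper applies $(a)$ to the spectral set $Q^{c}$ and takes complements via $E^{\circ}=\big(\,\overline{E^{c}}\,\big)^{c}$, whereas you unroll that complementation into a direct contrapositive lifting argument, which is the same in substance.
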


\begin{proof}
	$(a)$ By the continuity of $\lambda$, $\overline{E} = \overline{\lambda^{-1}(Q)} \subseteq \lambda^{-1}(\, \overline{Q}\, )$. To see the reverse inclusion, let $x \in \lambda^{-1}(\, \overline{Q}\, )$ so that $\lx \in \overline{Q}$. Then, there exists a sequence $(u_k)$ in $Q$ such that $u_k \to \lx$. By the continuity of $\mu$, we have $\mu(u_k) \to \mu \big( \lx \big) = \lx$. Note that $\mu(u_k) \in \mu(\W) \subseteq \lambda(\V)$. Thus, by (A3) in Definition \ref{def: ftvn system}, for each $k$, there exists $x_k \in \V$ such that $\lambda(x_k) = \mu(u_k)$ with  $x_k$ and $x$ commuting. Since $\lambda$ is distance-preserving on two commuting elements (see Proposition \ref{prop: basic ftvn theorem}), we have
	\[ \norm{x_k - x} = \norm{\lambda(x_k) - \lx} = \norm{\mu(u_k) - \lx}. \]
	From Proposition \ref{prop: description of spectrality}$(a)$, $\mu(Q)\subseteq Q$. So,  $\lambda(x_k) = \mu(u_k) \in \mu(Q) \subseteq Q$. Hence, $x_n \in \lambda^{-1}(Q)$ and
	\[ \lim_{k \to \infty} \norm{x_k - x} = \lim_{k \to \infty} \norm{\mu(u_k) - \lx} = 0. \]
	This shows that $x \in \overline{\lambda^{-1}(Q)}$, implying $\lambda^{-1}(\, \overline{Q}\, ) \subseteq \overline{\lambda^{-1}(Q)}$.
	Hence we have Item $(a)$.
	\smallgap
	
	$(b)$ Since $Q$ is spectral in $\W$, $Q^c$ is also spectral in $\W$. Since $E^{\circ} = \big (\, \overline{E^{c}}\, \big )^c$, we see, by $(a)$, that
	\begin{align*}
	    E^{\circ} & = \Big (\lambda^{-1}(Q)\Big )^{\,\circ} 
     = \Big(\, \overline{\lambda^{-1}(Q)^c}\, \Big)^c 
     = \Big(\, \overline{\lambda^{-1}(Q^c)}\, \Big)^c \\
     &= \Big( \lambda^{-1} \Big (\, \overline{Q^c}\, \Big ) \Big)^c 
     = \lambda^{-1} \Big( \, \big (\overline{Q^c}\, \big)^c\, \Big) 
     = \lambda^{-1}(Q^{\circ}).
	\end{align*} 
	
	\smallgap
	
	$(c)$ As $\partial(E)=\overline{E}\,\backslash\,E^\circ$, the result follows from $(a)$ and $(b)$ and the set-theoretic properties of $\lambda^{-1}$. \qed
\end{proof}

Suppose $(\V,\W,\lambda)$ is a FTvN system. By the continuity of $\lambda$ we know (from  Proposition \ref{prop: basic prop}) that the inverse image of a closed set (open set) is closed (respectively, open). Also, the inverse image of a compact set is compact when $\V$ is finite dimensional. In the following result, we show that under appropriate conditions, a set $Q$ is closed (open, compact) in $\W$ if and only if its inverse image $\lambda^{-1}(Q)$ is closed (respectively, open, compact) in $\V$.

\begin{proposition}\label{closedness of Q}
    Suppose $(\V,\, \W,\, \lambda)$ is a FTvN system with its reduced system $(\W,\, \W,\, \mu)$. Let $E = \lambda^{-1}(Q)$, where $Q$ is a spectral set in $\W$. Then the following statements hold. 
    \begin{itemize}
         \item [$(i)$] Suppose $\V$ is finite dimensional. If $E$ is closed (open) in $\V$, then $Q$ is closed (respectively, open) in $\W$.
         \item [$(ii)$] Suppose both $\V$ and $\W$ are finite dimensional. If $E$ is compact in $\V$, then $Q$ is compact in $\W$.
    \end{itemize}
\end{proposition}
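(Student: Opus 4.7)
The plan is to leverage Propositions 2.5(a) and 2.5(c) to pull the topological question from $\V$ back to $\W$ via the reduction map $\mu$. Specifically, since $Q$ is spectral in $(\W,\W,\mu)$, Proposition 2.5(a) gives $\lambda(E) = \mu(Q)$, while Proposition 2.5(c) gives $Q = [\mu(Q)]$. Moreover, because $[S] = \mu^{-1}(\mu(S))$ in $(\W,\W,\mu)$ (any element in the spectral hull of $S$ has its $\mu$-image equal to $\mu(s)$ for some $s\in S$, hence in $\mu(S)$; conversely $\mu(v)\in \mu(S)$ means $v$ lies in a $\mu$-orbit meeting $S$), applying this with $S = \mu(Q)$ and using $\mu^2 = \mu$ yields the key identity
\[
Q = \mu^{-1}(\mu(Q)).
\]
Also, by Proposition 2.2(b) applied to $(\W,\W,\mu)$, $\mu$ is $1$-Lipschitz, hence continuous.

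For the closed case of $(i)$, suppose $\V$ is finite dimensional and $E$ is closed in $\V$. The earlier corollary (item $(i)$ in the corollary following Proposition~3.1) tells us that $\lambda(E)$ is closed in $\W$, so $\mu(Q) = \lambda(E)$ is closed. Then $Q = \mu^{-1}(\mu(Q))$ is the preimage of a closed set under a continuous map, hence closed in $\W$. For the open case, observe that $Q^c$ is spectral in $\W$ (being the complement of a spectral set) and $E^c = \lambda^{-1}(Q^c)$ is closed in $\V$; the closed case applied to $Q^c$ and $E^c$ gives $Q^c$ closed, so $Q$ is open.

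For $(ii)$, suppose now that both $\V$ and $\W$ are finite dimensional and $E$ is compact in $\V$. By continuity of $\lambda$, $\mu(Q) = \lambda(E)$ is compact in $\W$. From $(i)$, $Q$ is closed in $\W$. To see that $Q$ is bounded, note that for every $u\in Q$, the norm-preserving property of $\mu$ gives $\norm{u} = \norm{\mu(u)}$, and $\mu(u) \in \mu(Q)$, which is bounded since it is compact; thus $Q$ is bounded. In the finite dimensional space $\W$, closedness and boundedness yield compactness, so $Q$ is compact. The main obstacle is really the single step where we need to pass from closedness of $E$ to closedness of $\lambda(E) = \mu(Q)$; this is precisely where the finite dimensionality of $\V$ enters, via the earlier corollary, and everything else in the proof is a straightforward pushforward or complementation argument.
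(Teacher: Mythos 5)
Your proof is correct and rests on the same ingredients as the paper's: the identities $\lambda(E)=\mu(Q)$ and $Q=[\mu(Q)]=\mu^{-1}(\mu(Q))$ from Proposition \ref{prop: description of spectrality}, norm-preservation plus finite dimensionality of $\V$ to get closedness of $\lambda(E)$, and complementation for the open case. The only (cosmetic) difference is that you route the closedness of $\lambda(E)$ through the earlier corollary instead of redoing the sequential compactness argument inline, which makes the write-up slightly more modular but changes nothing substantive.
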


\begin{proof}
    $(i)$ Suppose $E$ is closed in $\V$;  let $(u_k)$ be a sequence in $Q$ such that $u_k \to u \in \W$. We show that $u \in Q$. Since $Q$ is spectral and $\mu(\mu(u_k))=\mu(u_k)$, we have $\mu(u_k) \in [Q]\subseteq Q$ and $\mu(u_k) \to \mu(u)$ by the continuity of $\mu$. Now, for each $\mu(u_k)$, there exists $x_k \in E$ such that $\lambda(x_k) = \mu(u_k)$ by Proposition \ref{prop: description of spectrality}$(a)$. Since $(u_k)$ is convergent, it is bounded. As $\lambda$ and $\mu$ are norm-preserving, for all $k$ we have
    \[ \norm{x_k} = \norm{\lambda(x_k)} = \norm{\mu(u_k)} = \norm{u_k}. \]
    So, $(x_k)$ is bounded in $\V$. As $\V$ is finite dimensional, $(x_k)$ will have a convergent subsequence. Without loss of generality, let $x_{k} \to x$ for some $x \in \V$. As $E$ is closed, we have $x\in E$.  Since $\lambda(x_{k}) \to \lambda(x)$ and $\lambda(x_{k}) = \mu(u_{k}) \to \mu(u)$, we must have $\lambda(x) = \mu(u)$, that is, $\mu(u) = \lambda(x) = \mu \big( \lambda(x) \big)$. This shows that $u \in [\lambda(E)]$. However, since $Q$ is spectral, we have $[\lambda(E)] = [\mu(Q)] = Q$ by Proposition \ref{prop: description of spectrality}$(a,c)$; hence $u \in Q$ as we wanted.\\
    
    When $E$ is open, we work with the closed set $E^{c} = \lambda^{-1}(Q^{c})$. Since $Q^c$ is spectral in $\W$, from the above, $Q^c$ is closed, i.e., $Q$ is open. 

    \smallgap
    
    $(ii)$ Now suppose that $\V$ and $\W$ are finite dimensional with $E$ compact. Then, by the continuity of $\lambda$, $\lambda(E)$ is compact in $\W$. So, the set $\mu(Q)$ (which is $\lambda(E)$ from Proposition \ref{prop: description of spectrality}$(a)$)  is compact in $\W$. Now, from Proposition \ref{prop: basic prop} (applied to the finite dimensional FTvN system $(\W,\W,\mu)$), $[\mu(Q)]=\mu^{-1}(\mu(Q))$ is compact in $\W$. As $Q=[\mu(Q)]$ from Proposition \ref{prop: description of spectrality}$(c)$, we see that $Q$ is compact in $\W$. This completes the proof.
    \qed
\end{proof}

Our next result deals with convexity issues when both $E$ and $Q$ are spectral sets in their respective spaces.

\begin{theorem} \label{thm: transfer principle2}
    Suppose $(\V,\, \W,\, \lambda)$ is a FTvN system with its reduced system $(\W,\, \W,\, \mu)$. Let $E = \lambda^{-1}(Q)$, where $Q$ is a spectral set in $\W$. Then the following statements hold. 
	\begin{itemize}
		\item [$(a)$] Suppose $\W$ is finite dimensional. If $Q$ is convex, then $E$ is convex. More generally,
		\[ \conv \lambda^{-1}(Q)\subseteq \lambda^{-1}(\conv Q). \]
		\item [$(b)$] Suppose $\V$ is a Hilbert space and $\W$ is finite dimensional. Then,
		\[ \overline{\conv}\, \lambda^{-1}(Q)=\lambda^{-1} \big( \overline{\conv}\,Q \big). \]
		\item [$(c)$] Suppose both $\V$ and $\W$ are finite dimensional. If $E$ is convex, then $Q$ is convex. Thus, in this setting, every convex spectral set in $\V$ arises as the $\lambda$-inverse image of a  convex spectral set in $\W$. 
		\item[$(d)$] Suppose both $\V$ and $\W$ are finite dimensional. If $Q$ is compact, then 
		\[ \conv \lambda^{-1}(Q) = \lambda^{-1}(\conv Q). \]
		Moreover, if $Q$ is also convex, then $\lambda^{-1}(Q)$ is compact and convex. Thus,  in this setting, every compact convex spectral set in $\V$ arises as the $\lambda$-inverse image of a compact convex spectral set in $\W$.
	\end{itemize}
\end{theorem}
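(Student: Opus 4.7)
The plan is to tackle the four parts in the written order, viewing $(c)$ as the central obstacle and $(d)$ as a routine corollary once $(a)$ and $(b)$ are in place.

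For $(a)$, the main tool would be the Lidskii-type inequality of Proposition \ref{prop: Lidskii type theorem}$(b)$: for $x,y\in \V$ and $t\in [0,1]$, $\lambda(tx+(1-t)y) \prec t\lambda(x)+(1-t)\lambda(y)$ in $\W$. If $x,y\in E$ and $Q$ is spectral and convex, then $\lx,\ly\in \mu(Q)\subseteq Q$, so $r:=t\lx+(1-t)\ly \in Q$ by convexity. Spectrality of $Q$ places the $\mu$-orbit $[r]$ inside $Q$, and convexity then forces $\conv[r]\subseteq Q$; the majorization $\lambda(tx+(1-t)y)\in \conv[r]$ therefore delivers $\lambda(tx+(1-t)y)\in Q$, showing $E$ is convex. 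The same argument applied to a general convex combination $x=\sum_i t_i x_i$ would give $\conv \lambda^{-1}(Q)\subseteq \lambda^{-1}(\conv Q)$, after invoking Proposition \ref{prop: spectral invariance}$(c)$ to ensure $\conv Q$ is spectral in the finite-dimensional $\W$.

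For $(b)$, one inclusion $\overline{\conv}\,\lambda^{-1}(Q)\subseteq \lambda^{-1}(\overline{\conv}\,Q)$ will follow from $(a)$ together with continuity of $\lambda$ (apply the closure-preserving Theorem \ref{thm: transfer principle1}$(a)$ to the spectral set $\conv Q$). I plan to prove the reverse inclusion by separation in the Hilbert space $\V$: if $x\in \lambda^{-1}(\overline{\conv}\,Q)\setminus \overline{\conv}\,\lambda^{-1}(Q)$, strict separation produces $c\in \V$ with
\[
\ip{c}{x}>\sup\big\{\ip{c}{y}:y\in \lambda^{-1}(Q)\big\}.
\]
The crux is to identify this supremum with $\sup\{\ip{\lc}{q}:q\in \overline{\conv}\,Q\}$. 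The $\leq$ direction will come from the FTvN inequality together with $\lambda(\lambda^{-1}(Q))=\mu(Q)\subseteq Q$, while the $\geq$ direction will use $(A3)$ to lift every $q\in \mu(Q)\subseteq \ran\lambda$ to some $y\in \lambda^{-1}(Q)$ with $\ip{c}{y}=\ip{\lc}{q}$, combined with the FTvN inequality in the reduced system (which forces $\sup_{Q}\ip{\lc}{\cdot}=\sup_{\mu(Q)}\ip{\lc}{\cdot}$) and the standard fact that a continuous linear functional attains the same supremum on $Q$, $\conv Q$, and $\overline{\conv}\,Q$. Since $\lx\in \overline{\conv}\,Q$ implies $\ip{c}{x}\leq \ip{\lc}{\lx}\leq \sup_{q\in \overline{\conv}\,Q}\ip{\lc}{q}$, the strict separation would be contradicted.

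Part $(c)$ will be the hardest step; a naive approach that only leverages $\mu(Q)=\lambda(E)$ being convex cannot work, because a spectral subset of $\W$ with convex $\mu$-image may still fail to be convex (e.g., $\{(1,2),(2,1)\}\subset \R^2$ in the rearrangement system). My plan is to exploit the full convexity of $E$ inside $\V$. Given $q_1,q_2\in Q$ and $t\in[0,1]$, the Lidskii inequality applied in the self-reduced system $(\W,\W,\mu)$ gives $\mu(tq_1+(1-t)q_2)\prec t\mu(q_1)+(1-t)\mu(q_2)=:r$, with $r\in \mu(Q)$ by convexity of $\mu(Q)=\lambda(E)$ (which itself follows from \cite{gowda2023commutativity}, Proposition 4.2, applied to the convex spectral set $E$). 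Since $r$ and $\mu(tq_1+(1-t)q_2)$ both lie in $\ran\mu=\ran\lambda$, I would lift them to $y,y'\in \V$ with $\lambda(y)=r$ and $\lambda(y')=\mu(tq_1+(1-t)q_2)$. The converse half of Proposition \ref{prop: Lidskii type theorem}$(c)$, available because $\V$ is finite dimensional, then upgrades $\lambda(y')\prec \lambda(y)$ in $\W$ to $y'\prec y$ in $\V$; consequently $y'\in \conv[y]\subseteq \conv E=E$ by spectrality and convexity of $E$. It follows that $\mu(tq_1+(1-t)q_2)=\lambda(y')\in \lambda(E)=\mu(Q)$, and spectrality of $Q$ then yields $tq_1+(1-t)q_2\in Q$.

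Part $(d)$ should fall out of $(a)$ and $(b)$: Proposition \ref{prop: basic prop}$(b)$ makes $\lambda^{-1}(Q)$ compact, so $\conv \lambda^{-1}(Q)$ is compact and coincides with $\overline{\conv}\,\lambda^{-1}(Q)$; similarly $\conv Q=\overline{\conv}\,Q$ in finite-dimensional $\W$. Part $(b)$ then gives $\conv \lambda^{-1}(Q)=\lambda^{-1}(\conv Q)$, and if $Q$ is additionally convex this common set equals $\lambda^{-1}(Q)$, which is therefore compact and convex.
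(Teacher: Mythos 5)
Your proposal is correct, and parts $(a)$, $(b)$, and $(d)$ follow essentially the same route as the paper: the Lidskii-type majorization plus spectrality-and-convexity of $Q$ for $(a)$, separation in the Hilbert space $\V$ combined with the $(A3)$ lifting and the FTvN inequality in the reduced system for $(b)$, and the reduction of $(d)$ to $(b)$ via compactness of convex hulls in finite dimensions. The interesting divergence is in $(c)$. The paper, after writing $\mu(tv+(1-t)u) \prec t\mu(v)+(1-t)\mu(u)$, does \emph{not} invoke the convexity of $\lambda(E)$; instead it uses $(A3)$ to manufacture an element $\barx$ with $\lambda(\barx)=\lambda(x)$ that commutes with $y$, so that $ty+(1-t)\barx$ lies in $E$ and has $\lambda$-image exactly $t\ly+(1-t)\lambda(\barx)$, and only then pulls the majorization back to $\V$ via the converse half of Proposition \ref{prop: Lidskii type theorem}$(c)$. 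You replace that construction by citing Proposition~2.4 (\cite{gowda2023commutativity}, Proposition 4.2) to conclude directly that $\lambda(E)=\mu(Q)$ is convex, so that $r:=t\mu(q_1)+(1-t)\mu(q_2)\in\lambda(E)$, and then pull back exactly as the paper does. Your version is shorter and cleaner at the cost of importing an external result whose proof presumably contains the same commuting-lift idea; the paper's version is self-contained modulo the axioms. Both are valid. Two small housekeeping points: you do not address the final sentences of $(c)$ and $(d)$ (that every convex, respectively compact convex, spectral set in $\V$ arises as $\lambda^{-1}$ of a set of the same type in $\W$); these need Proposition \ref{prop: description of spectrality}$(b)$ to produce the spectral $\widetilde{Q}$ with $E=\lambda^{-1}(\widetilde{Q})$, and for $(d)$ an extra line showing $\widetilde{Q}=\mu^{-1}\big(\mu(\widetilde{Q})\big)=\mu^{-1}\big(\lambda(E)\big)$ is compact. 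These are routine given what you have established, but should be stated.
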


\begin{proof}
	$(a)$ Suppose first that $Q$ is convex (in the finite dimensional space $\W$). We show that $E=\lambda^{-1}(Q)$ is convex. Fix $x, y \in E$ and $0 \leq t \leq 1$ in $\R$. Then, as $\W$ is finite dimensional, from Proposition \ref{prop: Lidskii type theorem}$(b)$,
	\[ u := \lambda(tx + (1-t)y) \prec t \lx + (1-t)\ly =:v. \]
	Since $\lx, \ly \in Q$ and $Q$ is convex, we have $v \in Q$. As $u \prec v$, we have $u \in \conv  [v]$; thus, we can write $u$ as a convex combination of $v_k$s, where $v_k \in [v]$. Now, as $Q$ is spectral with  $v\in Q$ and $v_k \in [v]$, we have $v_k \in Q$ for each $k$. Hence, by the convexity of $Q$, we have $u \in Q$. It follows that $tx + (1-t)y \in \lambda^{-1}(Q) = E$. This proves  the convexity of $E$ in $\V$.
	
	Now consider a general spectral set $Q$ in $\W$. As $\W$ is finite dimensional, we can apply Proposition \ref{prop: spectral invariance}$(c)$ in the system $(\W, \W, \mu)$ to see that  $\conv Q$ is convex and spectral in $\W$. Hence, by what we have proved above, $\lambda^{-1}(\conv Q)$ is convex and spectral. As
	$\lambda^{-1}(Q)\subseteq \lambda^{-1}(\conv Q)$, we see that $\conv \lambda^{-1}(Q)\subseteq \lambda^{-1}(\conv Q)$.
	
	\smallgap
	
	$(b)$ Assume that $\V$ is a Hilbert space and $\W$ is finite dimensional. As $Q$ is spectral in $\W$, by Proposition \ref{prop: spectral invariance}$(b)$, $\overline{\conv}\, Q$ is closed, convex, and spectral in $\W$. By the continuity of $\lambda$ and Item $(a)$, $\lambda^{-1}\big(\, \overline{\conv}\, Q \big)$ is closed and convex as well. Since this set contains $\lambda^{-1}(Q)$, we have 
	\begin{equation} \label{eq: closed convex hull first inclusion}
		\overline{\conv}\,\lambda^{-1}(Q)\subseteq \lambda^{-1}\big(\, \overline{\conv}\, Q \big).
	\end{equation}
	We now prove the reverse inclusion. Suppose $y \in \lambda^{-1} \big( \overline{\conv}\, Q \big)$, but $y \notin \overline{\conv} \lambda^{-1}(Q)$. Since $\V$ is a Hilbert space, by the separation theorem, there exist $c \in \V$ and an $\alpha \in \R$ such that
	\[ \ip{y}{c} > \alpha \geq \ip{x}{c} \quad \text{for all} \; x \in  \overline{\conv} \lambda^{-1}(Q). \]
	In particular, the inequality holds for all $x \in \lambda^{-1}(Q)$. Now, for each $q \in Q$, we have (from Proposition \ref{prop: description of spectrality}$(a)$) that $\mu(q) \in \mu(Q) = \lambda(E)$, so there exists (by applying $(A3)$ in Definition \ref{def: ftvn system}) $x \in \V$ such that $\lx = \mu(q)$ and
	\[ \ip{x}{c} = \ip{\lx}{\lc} = \ip{\mu(q)}{\lc}. \]
	Since $\lx = \mu(q) \in \mu(Q) \subseteq Q$ (from Proposition \ref{prop: description of spectrality}$(a)$), we have $x \in \lambda^{-1}(Q)$. Thus, for all $q \in Q$, we see that
	\[ \ip{q}{\lc} \leq \ip{\mu(q)}{\mu(\lc)} = \ip{\mu(q)}{\lc} = \ip{x}{c} \leq \alpha. \]
	As $q \in Q$ is arbitrary, by linearity and continuity of the inner product, $\ip{p}{\lc} \leq \alpha$ for all $p \in \overline{\conv}\, Q$. Specializing this to $p = \ly \in \overline{\conv}\, Q$, we get
	\[\ip{\ly}{\lc} \leq \alpha. \]
	However, this contradicts the inequalities
	\[ \alpha <\ip{y}{c} \leq \ip{\ly}{\lc}. \]
	We thus have the required reverse inclusion. Hence, the inclusion in \eqref{eq: closed convex hull first inclusion} becomes an equality.
	
	\smallgap
	
	$(c)$ Now, suppose both $\V$ and $\W$ are finite dimensional and $E = \lambda^{-1}(Q)$ is convex in $\V$. We claim  that $Q$ is convex in $\W$. Let  $u, v \in Q$ and $0 \leq t \leq 1$ in $\R$. Since $\mu(u), \mu(v) \in \mu(Q) = \lambda(E)$ by Proposition \ref{prop: description of spectrality}$(a)$, we may choose $x, y \in E$ such that 
	\[ \lx = \mu(u) \quad \text{and} \quad \ly = \mu(v). \]
	Now, put $w := tv + (1-t)u \in \W$. Then, by $(C1)$ in Definition \ref{def: reduced system}, there exists $z \in \V$ such that $\lambda(z) = \mu(w)$. In the proof given below, we will show that $z$ belongs to  $E$. Then, $\mu(w) = \lambda(z) \in \lambda(E) = \mu(Q) \subseteq Q$; hence we have $w \in Q$ due to the spectrality of $Q$, proving the convexity of $Q$.
	
	First, note that $(\W,\, \W,\, \mu)$ is a reduced system of itself (see the description after Definition \ref{def: reduced system}). Hence, applying Proposition \ref{prop: Lidskii type theorem}$(b)$ with the assumption that $\W$ is finite dimensional, we have
	\[ \lambda(z) = \mu(w) = \mu \big( tv + (1-t)u \big) \prec t\mu(v) + (1-t)\mu(u). \]
	Now, applying $(A3)$ in Definition \ref{def: ftvn system} with $q = \lx$ and $c = y$, we choose $\barx \in \V$ such that
	\[ \lambda(\barx) = \lx \quad \text{and} \quad \ip{y}{\barx} = \ip{\ly}{\lambda(\barx)}. \]
	The second equation above implies that $y$ and $\barx$ commute. Hence, $ty$ and $(1-t) \barx$ also commute; consequently, by Proposition \ref{prop: basic ftvn theorem}$(e)$, $\lambda \big( ty+(1-t) \barx \big) = t\lambda(y) + (1-t)\lambda(\barx)$. Now, since $E$ is spectral, $\lambda(\barx) = \lx$ and $x \in E$ imply that $\barx \in E$; hence we have $ty + (1-t)\barx \in E$ by the convexity of $E$. It follows that
	\begin{align*}
		\lambda(z) = \mu(w) & = \mu(tv+(1-t)u) \\
		& \prec t\mu(v) + (1-t)\mu(u) \\
		& \prec t\ly + (1-t) \lambda(\barx) \\
		& = \lambda \big( ty+(1-t) \barx \big).
	\end{align*}
	Since $\V$ is also assumed to be finite dimensional, this gives $z \prec ty + (1-t)\barx \in E$ by Proposition \ref{prop: Lidskii type theorem}$(c)$. Finally, as $E$ is convex and spectral, this implies that $z \in E$ as we have wanted.
	
	Lastly, consider a convex spectral set $E$ in $\V$. From Proposition \ref{prop: description of spectrality}$(b)$, there
	exists a spectral set $\widetilde{Q}$ in $\W$ such that $E = \lambda^{-1}(\widetilde{Q})$. Moreover, this $\widetilde{Q}$ must be convex by the argument we had earlier.
	
	\smallgap
	
	$(d)$ Since $\W$ is finite dimensional and $Q$ is compact (and spectral), it follows (see, for example, \cite{rudin1973functional}, Theorem 3.25) that $\conv Q$ is compact and convex. So, $\overline{\conv}\, Q = \conv Q$. Since $\V$ is finite dimensional, 
	$\lambda^{-1}(Q)$ is also compact in $\V$ by Proposition \ref{prop: basic prop}. Therefore, its closed convex hull is just $\conv \lambda^{-1}(Q)$. Thus, by $(b)$,
	\[ \conv \lambda^{-1}(Q) = \overline{\conv}\, \lambda^{-1}(Q) = \lambda^{-1} \big( \overline{\conv}\, Q \big) = \lambda^{-1}(\conv Q). \]
	
	Now suppose that $Q$, in addition to being compact and spectral, is also convex. Then, as noted previously, $\lambda^{-1}(Q)$ is compact. Since $\conv Q=Q$, we have $\conv \lambda^{-1}(Q) = \lambda^{-1}(\conv Q) = \lambda^{-1}(Q)$. Thus, $\lambda^{-1}(Q)$ is compact and convex in $\V$.
	
	Finally, suppose $E$ is a compact convex spectral set in $\V$. Based on the previous results, we write $E = \lambda^{-1}(\widetilde{Q})$, where $\widetilde{Q}$ is spectral and convex in $\W$.
	We now claim that $\widetilde{Q}$ is also compact. As $E$ is compact and $\lambda$ is continuous, $\lambda(E)$ is compact in $\W$. Since $\mu(\widetilde{Q}) = \lambda(E)$ (see Proposition \ref{prop: description of spectrality}$(a)$) and $\widetilde{Q} = \mu^{-1}\big( \mu(\widetilde{Q}) \big)$, we see from Proposition \ref{prop: basic prop} that $\widetilde{Q}$ is compact in $\W$. This completes the proof. \qed
\end{proof}

\begin{corollary} \label{cor: closedness}
    Suppose $\V$ and $\W$ are finite dimensional. Suppose $Q$ is a spectral convex cone in $\W$. Then, $\lambda^{-1}(Q)$ is a spectral convex cone in $\V$.
\end{corollary}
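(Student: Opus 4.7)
The plan is to verify the three defining properties of a spectral convex cone for $E := \lambda^{-1}(Q)$: spectrality, convexity, and closure under nonnegative scalar multiplication. Each follows from a result already established in the paper, so the proof is essentially an assembly.

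First, spectrality of $E$ is immediate: $E = \lambda^{-1}(Q)$ is the $\lambda$-inverse image of a set in $\W$, which is the definition of a spectral set in $\V$. Second, convexity of $E$ follows directly from Theorem \ref{thm: transfer principle2}$(a)$: since $\W$ is finite dimensional and $Q$ is a convex spectral set in $\W$, the set $\lambda^{-1}(Q)$ is convex in $\V$.

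The remaining step is to show that $E$ is closed under multiplication by nonnegative scalars. For this I would invoke Proposition \ref{prop: basic ftvn theorem}$(a)$, which states that $\lambda(tx) = t\lx$ for all $t \geq 0$. Given $x \in E$ and $t \geq 0$, we have $\lx \in Q$, and since $Q$ is a cone, $t\lx \in Q$, so $\lambda(tx) = t\lx \in Q$, which gives $tx \in E$. Combining the three observations yields that $E$ is a spectral convex cone in $\V$.

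No step here is a genuine obstacle; the work has already been done in Proposition \ref{prop: basic ftvn theorem} and Theorem \ref{thm: transfer principle2}. The only mild subtlety is that one should observe the finite dimensionality of $\W$ is used (via Theorem \ref{thm: transfer principle2}$(a)$) for convexity, while the cone property requires neither dimension assumption nor the reduced system structure.
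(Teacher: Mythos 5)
Your proof is correct and is exactly the intended argument: the paper states this corollary without proof as an immediate consequence of Theorem~\ref{thm: transfer principle2}$(a)$ (for convexity) together with the positive homogeneity $\lambda(tx)=t\lx$ from Proposition~\ref{prop: basic ftvn theorem}$(a)$ (for the cone property), which is precisely the assembly you give. The one point worth making explicit is that the corollary inherits the standing assumption of a reduced system $(\W,\W,\mu)$ from the surrounding discussion --- this is what makes ``spectral in $\W$'' meaningful and is a hypothesis of Theorem~\ref{thm: transfer principle2}$(a)$ --- while, as you correctly observe, the spectrality and cone parts of the conclusion need neither the reduced system nor any dimension assumption.
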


We now investigate the spectrality of the set of extreme points of a compact convex spectral set. We recall the {\it Krein-Milman theorem} (\cite{conway2019course}, Theorems 7.4 and 7.8) stated in our setting of an inner product space:
{\it Every nonempty compact convex set $K$ is the closed convex hull of its extreme points. Moreover, if $S$ is any subset of  $K$ whose closed convex hull is $K$, then $\ext(K)\subseteq \overline{S}$.}
We note that $\ext(K)$ may not be closed even in a finite dimensional space, see \cite{conway2019course}, page 148.

\begin{lemma}
Consider a FTvN system $(\V,\W,\lambda)$, where  $\V$ is finite dimensional. If $E$ is a convex spectral set in $\V$, then $\ext(E)$ and $\overline{\ext(E)}$ are  spectral in $\V$.
\end{lemma}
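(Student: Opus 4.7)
The plan is to show that whenever $x \in \ext(E)$ and $y \in [x]$, one has $y \in \ext(E)$; this yields $[x] \subseteq \ext(E)$ and hence spectrality of $\ext(E)$. Fix such $x$ and $y$; since $E$ is spectral, $y \in E$. Suppose $y = \tfrac{a+b}{2}$ with $a, b \in E$; the goal is to force $a = b = y$. The strategy is to lift this decomposition of $y$ to a decomposition of $x$ and then invoke extremality of $x$ to perform the rigidity step.

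For the lifting I would invoke Proposition \ref{prop: convex hull of a sum} (which is exactly where the finite-dimensionality of $\V$ is needed). Since $\lambda(2y) = 2\lambda(y) = 2\lambda(x)$ by Proposition \ref{prop: basic ftvn theorem}$(a)$, one checks $[a+b] = [2y] = 2[x]$ and therefore $\conv[a+b] = 2\conv[x]$. Substituting this into $\conv[a+b] \subseteq \conv[a] + \conv[b]$ gives $2\conv[x] \subseteq \conv[a] + \conv[b]$; in particular $2x = p + q$ for some $p \in \conv[a]$ and $q \in \conv[b]$. Because $[a], [b] \subseteq E$ by spectrality of $E$ and $E$ is convex, we have $p, q \in E$. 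The identity $x = \tfrac{p+q}{2}$ with $p, q \in E$ and $x \in \ext(E)$ then forces $p = q = x$, so $x \in \conv[a] \cap \conv[b]$.

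The next step promotes $x \in \conv[a]$ to $x \in [a]$: writing $x = \sum_{i} t_i a_i$ with $a_i \in [a] \subseteq E$ and $t_i > 0$, a standard peeling argument (pair off one $a_i$ against the convex combination of the remaining ones and apply extremality at each step) yields $a_i = x$ for every $i$, and hence $\lambda(x) = \lambda(a)$. Symmetrically $\lambda(x) = \lambda(b)$, so $a, b \in [y]$ and in particular $\|a\| = \|b\| = \|y\|$. Expanding $\|y\|^2 = \bigl\|\tfrac{a+b}{2}\bigr\|^2$ with these equal norms gives $\langle a, b\rangle = \|a\|\,\|b\|$, and the equality case of Cauchy--Schwarz then forces $a = b$, hence $a = b = y$. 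This proves $\ext(E)$ is spectral, and the spectrality of $\overline{\ext(E)}$ follows immediately from Proposition \ref{prop: spectral invariance}$(a)$.

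The main obstacle I anticipate is precisely the step of transferring extremality from $x$ to an arbitrary orbit-mate $y$: in a general FTvN system we have no automorphism carrying $x$ to $y$, so a decomposition of $y$ does not visibly yield a decomposition of $x$. Proposition \ref{prop: convex hull of a sum} serves as the substitute for such an automorphism, converting the additive decomposition $a+b = 2y$ into the inclusion $2\conv[x] \subseteq \conv[a] + \conv[b]$; once this is in hand, the rest of the argument is routine convex-geometric rigidity on the sphere of radius $\|x\|$.
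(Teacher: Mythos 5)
Your proof is correct and follows essentially the same route as the paper's: both use Proposition \ref{prop: convex hull of a sum} to convert the decomposition $y=\tfrac{a+b}{2}$ into the inclusion $[y]\subseteq\tfrac{1}{2}\bigl(\conv[a]+\conv[b]\bigr)$, apply extremality of $x$ to land in $[a]\cap[b]$, and finish with the strict convexity of the inner-product norm (your Cauchy--Schwarz equality case) plus Proposition \ref{prop: spectral invariance}$(a)$ for $\overline{\ext(E)}$. The only difference is that you spell out the intermediate steps (the passage from $x\in\conv[a]$ to $x\in[a]$ via peeling) that the paper leaves implicit.
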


\begin{proof}
Assuming that $\ext(E)$ is nonempty, let $x\in \ext(E)$ and take $y \in \V$ such that $\ly = \lx$. We will show that $y \in \ext(E)$. Now, as $E$ is spectral, we have $y \in E$. If possible, let $y = \frac{a+b}{2}$, where $a,b\in E$. Using Proposition \ref{prop: convex hull of a sum},
we have 
\[ x\in [y] = \Big[ \frac{a+b}{2} \Big] \subseteq \frac{\conv [a]+\conv [b]}{2} \subseteq E, \]
where the last inclusion follows from the convexity and spectrality of $E$. Since $x\in \ext(E)$, we must have $x\in [a]$ and $x\in [b]$; so, $\ly = \lx = \lambda(a) = \lambda(b)$. This implies that 
$\norm{y} = \norm{a} = \norm{b}$. From the strict convexity of the norm, we must have $y=a=b$. This proves that $\ext(E)$ is spectral. 
Consequently, from Proposition \ref{prop: spectral invariance},
$\overline{\ext(E)}$ is also spectral. \qed
\end{proof}

\begin{theorem}
	Suppose $(\V, \W, \lambda)$ is a FTvN system with its reduced system $(\W,  \W, \mu)$, where $\V$ and $\W$ are finite dimensional.  Let $E = \lambda^{-1}(Q)$, where $Q$ is a compact convex spectral set in $\W$. Then,
	\begin{equation} \label{eq: extreme point inclusion} 
		\lambda^{-1} \big( \ext(Q) \big) \subseteq \ext \big( \lambda^{-1}(Q) \big) \subseteq \lambda^{-1} \big( \, \overline{\ext(Q)} \, \big),
	\end{equation}
	and
	\begin{equation} \label{eq: extreme point closures}
		\overline{\lambda^{-1} \big( \ext(Q) \big)} = \overline{\ext \big( \lambda^{-1}(Q) \big)} = \lambda^{-1} \big( \, \overline{\ext(Q)} \, \big).
	\end{equation}
\end{theorem}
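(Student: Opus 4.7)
My plan is to establish the two inclusions in \eqref{eq: extreme point inclusion} first and then derive the equalities in \eqref{eq: extreme point closures} by taking closures and invoking Theorem~\ref{thm: transfer principle1}$(a)$. Throughout, I will use the preceding Lemma, applied both to $E$ in $(\V,\W,\lambda)$ and to $Q$ in the reduced FTvN system $(\W,\W,\mu)$, which tells us that $\ext(E)$ and $\ext(Q)$ are spectral in their respective systems; I will also use that, under the standing finite-dimensional hypotheses, $E$ and $Q$ are compact and convex (by Theorem~\ref{thm: transfer principle2}$(d)$), so the Krein-Milman theorem applies to each.

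The heart of the argument is the left inclusion $\lambda^{-1}(\ext(Q))\subseteq \ext(E)$. I will take $x\in\lambda^{-1}(\ext(Q))$, assume a decomposition $x=\tfrac{1}{2}(a+b)$ with $a,b\in E$, and show that $a=b=x$. From Proposition~\ref{prop: Lidskii type theorem}$(b)$ I obtain $\lx \prec \tfrac{1}{2}(\lambda(a)+\lambda(b))$, so $\lx$ is a convex combination of elements of the $\mu$-orbit $\bigl[\tfrac{1}{2}(\lambda(a)+\lambda(b))\bigr]$, all of which lie in $Q$ because $Q$ is spectral. Extremality of $\lx$ in $Q$ forces $\lx$ itself into this orbit, i.e.\ $\lx=\mu\bigl(\tfrac{1}{2}(\lambda(a)+\lambda(b))\bigr)$. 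Since $\mu$ and $\lambda$ are both norm-preserving, this yields $\norm{(a+b)/2}=\norm{(\lambda(a)+\lambda(b))/2}$; expanding the squared norms gives $\ip{a}{b}=\ip{\lambda(a)}{\lambda(b)}$, so $a$ and $b$ commute in $(\V,\W,\lambda)$. Proposition~\ref{prop: basic ftvn theorem}$(e)$ then upgrades the majorization to the equality $\lx=\tfrac{1}{2}(\lambda(a)+\lambda(b))$, and since $\lx\in\ext(Q)$ with $\lambda(a),\lambda(b)\in Q$, extremality gives $\lambda(a)=\lambda(b)=\lx$. Consequently $\norm{a}=\norm{b}=\norm{x}=\norm{(a+b)/2}$, and the strict convexity of the norm on the inner product space $\V$ (equivalently, the equality case of Cauchy-Schwarz) forces $a=b=x$.

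For the right inclusion $\ext(E)\subseteq \lambda^{-1}(\overline{\ext(Q)})$, I will apply Krein-Milman in $\W$ to write $Q=\overline{\conv}\,\ext(Q)$ and then invoke the closed-convex-hull transfer Theorem~\ref{thm: transfer principle2}$(b)$ with the $\mu$-spectral set $\ext(Q)$:
\[ \overline{\conv}\,\lambda^{-1}(\ext(Q)) \;=\; \lambda^{-1}\bigl(\overline{\conv}\,\ext(Q)\bigr) \;=\; \lambda^{-1}(Q) \;=\; E. \]
Applying the second part of Krein-Milman inside $\V$ to the compact convex set $E$ and the subset $\lambda^{-1}(\ext(Q))\subseteq E$ yields $\ext(E)\subseteq \overline{\lambda^{-1}(\ext(Q))}$, and Theorem~\ref{thm: transfer principle1}$(a)$ identifies this closure with $\lambda^{-1}(\overline{\ext(Q)})$. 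For the equalities \eqref{eq: extreme point closures}, I take closures across the inclusion chain \eqref{eq: extreme point inclusion}: the left end is $\overline{\lambda^{-1}(\ext(Q))}=\lambda^{-1}(\overline{\ext(Q)})$ by Theorem~\ref{thm: transfer principle1}$(a)$, and the right end is already closed by continuity of $\lambda$, so both outer sets coincide and squeeze $\overline{\ext(E)}$ to the same value. The main technical hurdle is the commutativity-from-norm step inside the first inclusion; the rest is bookkeeping with the transfer principles established earlier in the section.
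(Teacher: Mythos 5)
Your proof is correct, and its overall architecture matches the paper's: Lidskii-type majorization plus extremality plus strict convexity of the norm for the left inclusion of \eqref{eq: extreme point inclusion}, and Krein--Milman combined with the closed-convex-hull transfer principle for the right inclusion and for \eqref{eq: extreme point closures}. The one place you genuinely diverge is the middle of the left inclusion. The paper passes from $\lambda(u)\prec\tfrac12(\lambda(a)+\lambda(b))$ to $\lambda(u)\in\tfrac12\big(\conv[\lambda(a)]+\conv[\lambda(b)]\big)$ via Proposition~\ref{prop: convex hull of a sum}, and then uses extremality to place $\lambda(u)$ in both orbits $[\lambda(a)]$ and $[\lambda(b)]$ simultaneously. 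You instead stay with the single orbit of $\tfrac12(\lambda(a)+\lambda(b))$, use extremality once to get $\lambda(x)=\mu\big(\tfrac12(\lambda(a)+\lambda(b))\big)$, and then extract the commutativity $\ip{a}{b}=\ip{\lambda(a)}{\lambda(b)}$ from the norm identity, so that Proposition~\ref{prop: basic ftvn theorem}$(e)$ turns the majorization into the exact equality $\lambda(x)=\tfrac12(\lambda(a)+\lambda(b))$ before extremality is applied a second time. This avoids Proposition~\ref{prop: convex hull of a sum} entirely in the first inclusion and exposes the commutation of $a$ and $b$ as a byproduct, which is a nice structural dividend; the paper's route is marginally shorter since it never needs the norm expansion. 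Both arguments land on $\lambda(a)=\lambda(b)=\lambda(x)$ and finish identically, and your treatment of the right inclusion (using $Q=\overline{\conv}\,\ext(Q)$ with Theorem~\ref{thm: transfer principle2}$(b)$ rather than the paper's $Q=\conv\overline{\ext(Q)}$ with part $(d)$) is an equivalent repackaging.
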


\begin{proof}
    We prove the first inclusion in \eqref{eq: extreme point inclusion}. As $\V$ and $\W$ are finite dimensional, from Theorem \ref{thm: transfer principle2}$(b)$, we see that $E$ is compact, convex, and spectral. Let $u \in \lambda^{-1}\big (\ext(Q)\big )$ so $\lu \in \ext(Q)$. To see that $u \in \ext \big( \lambda^{-1}(Q) \big)$, that is, $u$ is an extreme point of $E$, suppose $u = \frac{a+b}{2}$, where $a, b \in E$. We will show that $u = a = b$.
	Now, as $\W$ is finite dimensional, by Proposition \ref{prop: Lidskii type theorem},
	\[ \lu =\frac{\lambda(a+b)}{2} \prec \frac{\lambda(a) + \lambda(b)}{2}. \]
	This means that 
	\[ \lu \in \frac{\conv \big [\lambda(a) + \lambda(b)\big ]}{2} \subseteq \frac{\conv [\lambda(a)] + \conv  [\lambda(b)]}{2}, \]
	where we have used Proposition \ref{prop: convex hull of a sum} (specialized to $\W$). As $[\lambda(a)]$ and $[\lambda(b)]$ are subsets of the compact convex set $Q$ (in $\W$) and $\lu \in \ext(Q)$, we see that $\lu = w_1 = w_2$, where $w_1 \in [\lambda(a)]$ and $w_2 \in [\lambda(b)]$, i.e., $\mu(w_1) = \mu(\lambda(a))$ and $\mu(w_2) = \mu(\lambda(b))$. Since $\mu \circ \lambda = \lambda$ and $\lu = w_1$, we get 
	\[ \lu = \mu(\lu) = \mu(w_1) = \mu(\lambda(a)) = \lambda(a). \]
	Similarly, we get $\lu = \lambda(b)$. Hence, $\lu = \lambda(a) = \lambda(b)$ and so $\norm{u} = \norm{a} = \norm{b}$. Now, using the strict convexity of the (inner product) norm and $u = \frac{a+b}{2}$ we see that $u = a = b$. Thus, $u \in \ext(E)$, proving the first inclusion in \eqref{eq: extreme point inclusion}. By the above Lemma, $\overline{\ext(Q)}$ is spectral in $\W$. 
	The set $\overline{\ext(Q)}$, being a closed subset of the compact convex set $Q$, is compact. Since $\W$ is finite dimensional, its convex hull, $\conv \overline{\ext(Q)}$ is compact and convex and also a subset of $Q$. In view of the Krein-Milman theorem, we have
	\[ Q=\overline{\conv} \ext(Q)\subseteq \conv \overline{\ext(Q)}\subseteq Q. \]
	Thus, we have $Q = \conv \overline{\ext(Q)}$.
	
	We now prove the second inclusion in \eqref{eq: extreme point inclusion}. Define the closed set $S$ by $S := \lambda^{-1} \big( \, \overline{\ext(Q)} \, \big)$. As $\overline{\ext(Q)}$ is compact and spectral in $\W,$ by Theorem \ref{thm: transfer principle2}$(d)$, 
	\[ \conv S = \lambda^{-1} \big( \conv \overline{\ext(Q)} \, \big) = \lambda^{-1}(Q). \]
	Consequently, the closed convex hull of $S$ is $\lambda^{-1}(Q)$; by \cite{conway2019course}, Theorem 7.8, 
	\[ \ext \big( \lambda^{-1}(Q) \big) \subseteq \overline{S} = S. \]
	Thus, we have both the inclusions in \eqref{eq: extreme point inclusion}. We now apply Theorem \ref{thm: transfer principle1}$(a)$ to the spectral set $\ext(Q)$ to see
	\[ \overline{\lambda^{-1} \big( \ext(Q) \big)} = \lambda^{-1} \big( \, \overline{\ext(Q)} \, \big). \]
	Using this and taking closures of sets in \eqref{eq: extreme point inclusion}, we get \eqref{eq: extreme point closures}. \qed
\end{proof}

\noindent{\bf Note:} It is unclear if/when the equality holds in the first inclusion in \eqref{eq: extreme point inclusion}.
\gap

The following theorem extends a result proved in the setting of Euclidean Jordan algebras, see \cite{jeong2016spectral}, Theorem 3.3.

\begin{theorem}
    Suppose $(\V, \W, \lambda)$ is a FTvN system with its reduced system $(\W,  \W, \mu)$, where $\V$ and $\W$ are finite dimensional. If $Q_1$ and $Q_2$ are convex spectral sets in $\W$, then
    \[ \lambda^{-1}(Q_1+Q_2)=\lambda^{-1}(Q_1)+\lambda^{-1}(Q_2). \]
\end{theorem}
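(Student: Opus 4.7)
The plan is to handle the two inclusions separately, using the sublinearity and majorization machinery of Section~2. For $\lambda^{-1}(Q_1)+\lambda^{-1}(Q_2) \subseteq \lambda^{-1}(Q_1+Q_2)$, I first observe that $Q_1+Q_2$ is convex (trivially) and spectral in $\W$ by Proposition~\ref{prop: spectral invariance}$(f)$ applied to the reduced system $(\W,\W,\mu)$, which is legitimate because $\W$ is finite dimensional. Given $x_i \in \lambda^{-1}(Q_i)$, the sublinearity relation $\lambda(x_1+x_2) \prec \lambda(x_1)+\lambda(x_2)$ from Proposition~\ref{prop: Lidskii type theorem}$(b)$ places $\lambda(x_1+x_2)$ inside $\conv [\lambda(x_1)+\lambda(x_2)]$, which sits inside $Q_1+Q_2$ by convexity and spectrality of the latter.

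For the reverse inclusion, the main step is a lifting: given $x \in \lambda^{-1}(Q_1+Q_2)$ with $\lambda(x) = q_1+q_2$ and $q_i \in Q_i$, I will construct $y \in \lambda^{-1}(Q_1)+\lambda^{-1}(Q_2)$ satisfying $x \prec y$ in $\V$. Setting $p_i := \mu(q_i)$, the spectrality of $Q_i$ in $(\W,\W,\mu)$ yields $p_i \in Q_i$, while $p_i \in \ran \mu = \ran \lambda$ allows me to invoke $(A3)$ (applied with $c = a_1$ and $q = p_2$) to produce commuting elements $a_1, a_2 \in \V$ with $\lambda(a_i) = p_i$; Proposition~\ref{prop: basic ftvn theorem}$(e)$ then yields $\lambda(a_1+a_2) = p_1+p_2$, so $y := a_1+a_2$ lies in $\lambda^{-1}(Q_1)+\lambda^{-1}(Q_2)$.

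To establish $x \prec y$ in $\V$, I will use $\mu\circ\lambda = \lambda$ to rewrite $\lambda(x) = \mu(q_1+q_2)$ and then apply Proposition~\ref{prop: Lidskii type theorem}$(b)$ inside $(\W,\W,\mu)$ to obtain $\lambda(x) = \mu(q_1+q_2) \prec \mu(q_1) + \mu(q_2) = \lambda(y)$ in $\W$. The converse direction of Proposition~\ref{prop: Lidskii type theorem}$(c)$, available because $\V$ is finite dimensional, then promotes this to $x \prec y$ in $\V$. Since $\lambda^{-1}(Q_i)$ is convex by Theorem~\ref{thm: transfer principle2}$(a)$ and spectral by definition, their sum is again convex and spectral in $\V$ by Proposition~\ref{prop: spectral invariance}$(f)$, and in particular contains $\conv [y] \ni x$. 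The main obstacle is purely organizational: one must carefully distinguish majorization relations in $(\V,\W,\lambda)$ from those in $(\W,\W,\mu)$, and pass to the canonical representatives $p_i = \mu(q_i)$ in $\ran \lambda$ before appealing to $(A3)$.
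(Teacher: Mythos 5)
Your proof is correct, and for the harder inclusion it takes a genuinely different route from the paper. The forward inclusion $\lambda^{-1}(Q_1)+\lambda^{-1}(Q_2)\subseteq\lambda^{-1}(Q_1+Q_2)$ matches the paper's argument in substance (the paper passes through $\conv[\lambda(x_1)+\lambda(x_2)]\subseteq\conv[\lambda(x_1)]+\conv[\lambda(x_2)]$ via Proposition \ref{prop: convex hull of a sum}, whereas you instead invoke spectrality of $Q_1+Q_2$ from Proposition \ref{prop: spectral invariance}$(f)$; both work under the stated finite-dimensionality). For the reverse inclusion the paper also passes to the representatives $w_i=\mu(q_i)$ and picks $u_i\in\lambda^{-1}(Q_i)$ with $\lambda(u_i)=w_i$, but then forms the compact convex spectral sets $F_i=\lambda^{-1}(\conv[w_i])$ and proves $u\in F_1+F_2$ by a Hahn--Banach separation argument, deriving a contradiction from the FTvN sup identity \eqref{eq: intro ftvn} applied over the orbits $[u_1]$ and $[u_2]$. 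You instead build an explicit commuting pair $a_1,a_2$ via $(A3)$ so that $\lambda(a_1+a_2)=\mu(q_1)+\mu(q_2)$ by Proposition \ref{prop: basic ftvn theorem}$(e)$, obtain $\lambda(x)=\mu(q_1+q_2)\prec\mu(q_1)+\mu(q_2)$ in $(\W,\W,\mu)$ from Proposition \ref{prop: Lidskii type theorem}$(b)$ (legitimate, since $(\W,\W,\mu)$ is a reduced system of itself), and then promote this to $x\prec a_1+a_2$ in $\V$ by the converse half of Proposition \ref{prop: Lidskii type theorem}$(c)$, finishing with convexity and spectrality of $\lambda^{-1}(Q_1)+\lambda^{-1}(Q_2)$. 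Your version is more constructive and avoids separation entirely, at the cost of leaning on two imported results the paper's proof does not need here, namely the converse in Proposition \ref{prop: Lidskii type theorem}$(c)$ and Proposition \ref{prop: spectral invariance}$(f)$; the paper's version keeps the duality explicit but is organizationally heavier. The only cosmetic issue in your write-up is the circular phrasing ``$(A3)$ applied with $c=a_1$'' before $a_1$ is introduced: you should first choose any $a_1$ with $\lambda(a_1)=p_1$ (possible since $p_1\in\ran\mu\subseteq\ran\lambda$) and only then apply $(A3)$ with $c=a_1$, $q=p_2$ to produce $a_2$.
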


\begin{proof}
By relying only on the finite dimensionality of $\W$, we first prove the inclusion
\begin{equation} \label{eq: first inclusion}
    \lambda^{-1}(Q_1) + \lambda^{-1}(Q_2) \subseteq \lambda^{-1}(Q_1 + Q_2).
\end{equation}
Let $u_i \in \lambda^{-1}(Q_i)$ for $i = 1, 2$. Then, we have $\lambda(u_i) \in Q_i$. Since $\W$ is finite dimensional, by Proposition \ref{prop: Lidskii type theorem}$(b)$,
\[ \lambda(u_1 + u_2) \prec \lambda(u_1) + \lambda(u_2). \]
So,
\[ \lambda(u_1 + u_2) \in \conv  [\lambda(u_1) + \lambda(u_2)] \subseteq \conv [\lambda(u_1)] + \conv  [\lambda(u_2)], \]
where the inclusion on the right comes from Proposition \ref{prop: convex hull of a sum} (applied in $\W$). As $Q_1$ and $Q_2$ are spectral and convex, we see that $\conv [\lambda(u_i)] \subseteq Q_i$ and so
\[ \lambda(u_1 + u_2) \in Q_1 + Q_2. \]
This proves \eqref{eq: first inclusion}.

\smallgap

We now prove the reverse inclusion
\begin{equation} \label{eq: second inclusion}
    \lambda^{-1}(Q_1 + Q_2) \subseteq \lambda^{-1}(Q_1) + \lambda^{-1}(Q_2).
\end{equation}
Let $E_i := \lambda^{-1}(Q_i)$ for $i = 1, 2$ and take $u \in \lambda^{-1}(Q_1 + Q_2)$ so that $\lu = q_1 + q_2$ for some $q_1 \in Q_1$ and $q_2\in Q_2$. Then, by Proposition \ref{prop: description of spectrality}$(a)$,
\[ w_i := \mu(q_i) \in \mu(Q_i) = \lambda(E_i) \subseteq Q_i, \]
for $i = 1, 2$. Hence, $w_i = \mu(q_i) = \lambda(u_i)$, where $u_i \in E_i$ for $i=1, 2$. Now, the orbit $[w_i]$, in addition to being spectral, is compact in $\W$ (as $\W$ is finite dimensional); hence, by Proposition \ref{prop: spectral invariance}$(c)$, $\conv [w_i]$ is compact, convex, and spectral in $\W$. Since $\V$ is finite dimensional, by Theorem \ref{thm: transfer principle2}, 
\[ F_i:=\lambda^{-1}\big( \conv [w_i] \big) \]
 is compact, convex, and spectral.
 Thus, $F_1+F_2$ is compact and convex. 

\smallgap

We now claim that $u \in F_1 + F_2$. If we assume the contrary, then there exist $c\in \V$ and $\alpha\in \R$ such that
\[ \ip{c}{u} > \alpha \geq \ip{c}{x_1} + \ip{c}{x_2} \;\; \text{for all} \;\; x_1 \in F_1, x_2 \in F_2. \]
Noting that $u_i\in F_i$ for $i=1,2$, we vary $x_1$ over $[u_1]$ (which is a subset of $F_1$) and $x_2$ over $[u_2]$ (a subset of $F_2$). Applying \eqref{eq: intro ftvn} and using the equality $\mu \circ \lambda = \lambda$, we see that
\begin{align*}
    \ip{c}{u} > \alpha & \geq \ip{\lambda(c)}{\lambda(u_1)} + \ip{\lambda(c)}{\lambda(u_2)} \\
    & = \ip{\lambda(c)}{\mu(q_1)} + \ip{\lambda(c)}{\mu(q_2)} \\
    & = \ip{\mu(\lambda(c))}{\mu(q_1)} + \ip{\mu(\lambda(c))}{\mu(q_2)} \\
    & \geq \ip{\lambda(c)}{q_1} + \ip{\lambda(c)}{q_2} \\
    & = \ip{\lambda(c)}{q_1 + q_2} \\
    & = \ip{\lambda(c)}{\lu} \\
    & \geq \ip{c}{u}.
\end{align*}
 This contradiction justifies the claim that $u\in F_1+F_2$. Consequently,
\[ u \in \lambda^{-1}\big( \conv [w_1] \big) + \lambda^{-1}\big( \conv [w_2] \big) \subseteq \lambda^{-1}(Q_1) + \lambda^{-1}(Q_2). \]

Since $u$ is arbitrary in $\lambda^{-1}(Q_1+Q_2)$, we get the inclusion \eqref{eq: second inclusion}. Combining this with the inclusion \eqref{eq: first inclusion}, we have the stated equality in the theorem. \qed
\end{proof}

In all of the previous results, we considered transfer principles for spectral sets. Now, we present a transfer principle for spectral functions.

\smallgap

A celebrated result of Davis \cite{davis1957all} says that a unitarily invariant function on $\Hn$ (the space of all $n\times n$ complex Hermitian matrices) is convex if and only if its restriction to diagonal matrices is convex. This result has numerous 
applications in various fields. A generalization of this result for Euclidean Jordan algebras has already been observed in \cite{baes2007convexity}. In what follows, we consider a generalization to the setting of FTvN systems. In the result below, we relate  the convexity (lower semi-continuity) of a spectral function $\Phi$ ($= \phi \circ \lambda)$ to that of $\phi$.  Recall that a real-valued function $f$ on an inner product space $X$ is lower semi-continuous if for each real number $t$, the set $\{x\in X:  f(x)\leq t\}$ is closed in $X$.

\begin{theorem}
    Suppose $(\W, \W, \mu)$ is a reduced system of $(\V, \W, \lambda)$ with a spectral function $\phi:\W\rightarrow \R$.  Let  $\Phi = \phi \circ \lambda$.  If $\W$ is finite dimensional and $\phi$ is convex (lower semi-continuous), then $\Phi$ is convex (respectively, lower semi-continuous). Moreover, if $\V$ and $\W$ are both finite dimensional, then the convexity (lower semi-continuity) of $\Phi$ implies that of $\phi$.
\end{theorem}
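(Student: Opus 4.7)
The plan is to reduce everything to the transfer principles already proved (Proposition~3.1, Proposition~3.2, and Theorem~3.4) by passing to epigraphs. By Proposition~2.1(b), the product $(\R\times\V,\R\times\W,\Lambda)$ with $\Lambda(t,x)=(t,\lx)$ is a FTvN system, and per Example~2.14 its reduced system is $(\R\times\W,\R\times\W,M)$ with $M(t,u)=(t,\mu(u))$. Furthermore, because $\phi$ is a spectral function on $\W$ (constant on $\mu$-orbits), $\epi\phi$ is a spectral set in $(\R\times\W,\R\times\W,M)$: if $(s,v)$ satisfies $M(s,v)=M(t,u)$ with $(t,u)\in\epi\phi$, then $s=t$, $\mu(v)=\mu(u)$, and so $\phi(v)=\phi(u)\le t=s$. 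The identity $\epi\Phi=\Lambda^{-1}(\epi\phi)$ was already verified inside the proof of Proposition~2.1(b). Once these observations are in place, each of the four implications is a direct application.

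For the forward direction, suppose $\W$ is finite dimensional. If $\phi$ is convex, then $\epi\phi$ is a convex spectral set in the finite dimensional space $\R\times\W$, so Theorem~3.4(a) applied in the product system gives that $\epi\Phi=\Lambda^{-1}(\epi\phi)$ is convex in $\R\times\V$; hence $\Phi$ is convex. If $\phi$ is lower semi-continuous, then $\epi\phi$ is closed, and the continuity of $\Lambda$ (Proposition~3.1(a) for the product system) implies that $\epi\Phi=\Lambda^{-1}(\epi\phi)$ is closed, so $\Phi$ is lower semi-continuous.

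For the converse, suppose both $\V$ and $\W$ are finite dimensional, so that $\R\times\V$ and $\R\times\W$ are as well. Since $\Phi$ is spectral, $\epi\Phi$ is a spectral set in the product system (Proposition~2.1(b)). If $\Phi$ is convex, then $\epi\Phi$ is a convex spectral set in $\R\times\V$, and Theorem~3.4(c) applied in the product system yields that $\epi\phi$ is convex, i.e.\ $\phi$ is convex. If $\Phi$ is lower semi-continuous, then $\epi\Phi$ is closed, and Proposition~3.2(i) applied in the product system (with both spaces finite dimensional) implies that the spectral set $\epi\phi$ is closed in $\R\times\W$, so $\phi$ is lower semi-continuous.

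The only step that is not entirely mechanical is checking that $\epi\phi$ qualifies as a spectral set in the product reduced system; this rests on $\phi$ being constant on $\mu$-orbits (explicitly part of our hypothesis that $\phi$ is spectral) and is what allows us to plug directly into the transfer theorems without any extra work.
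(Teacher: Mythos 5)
Your proposal is correct and follows essentially the same route as the paper: pass to epigraphs via Proposition \ref{prop: epigraph}, work in the product system $(\R\times\V,\R\times\W,\Lambda)$ with reduced system $(\R\times\W,\R\times\W,M)$, and invoke the set-level transfer principles (Theorem \ref{thm: transfer principle2}(a),(c) for convexity and continuity of $\Lambda$ together with Proposition \ref{closedness of Q}(i) for closedness). Your direct verification that $\epi\phi$ is spectral in the reduced product system is a harmless substitute for the paper's appeal to Proposition \ref{prop: epigraph}(b), and the argument is complete.
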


\begin{proof}
    Recall that $(\R \times \W, \R \times \W, M)$ with $M(t, u) = \big( t, \mu(u) \big)$ is a reduced system of $(\R \times \V, \R \times \W, \Lambda)$ with $\Lambda \big( t, x \big) = \big( t, \lx \big)$ (see Example \ref{eg: product FTvN system} for details).
    We prove our assertions by considering the epigraphs of $\Phi$ and $\phi$, denoted by $\epi \Phi$ and $\epi \phi$, respectively. Note, from Proposition \ref{prop: epigraph}, that two epigraphs are spectral sets in their respective spaces and they are related by the equality
	\begin{equation} \label{eq: epigraph}
		 \epi \Phi = \Lambda^{-1}(\epi \phi).
	\end{equation}
    Suppose first that $\phi$ is convex (resp. lower semi-continuous). Then $\epi \phi$ is a convex (resp. closed) spectral set in the FTvN system $(\R \times \W, \R \times \W, M)$. Thus, assuming $\W$ is finite dimensional, we see that $\epi \Phi$ is convex (resp. closed) by Theorem \ref{thm: transfer principle2}$(a)$. This shows that $\Phi$ is convex (resp. lower semi-continuous) on $\V$.
    
    \smallgap
    
    Conversely, if $\Phi$ is convex (resp. lower semi-continuous), then $\epi \Phi$ is a convex (resp. closed) spectral set in the FTvN system $(\R \times \V, \R \times \W, \Lambda)$. In this case, assuming both $\V$ and $\W$ are finite dimensional, we see that $\epi \phi$ is also convex (resp. closed) and spectral on $\R \times \W$ by Theorem \ref{thm: transfer principle2}$(c)$ (resp. by Proposition \ref{closedness of Q}) applied to the relation \eqref{eq: epigraph}. This proves that $\phi$ is convex (resp. lower semi-continuous) on $\W$. \qed
\end{proof}

%%%%%%%%%%%%%%%%%%%%%%%%%%%%%%%%%%%%%%%%%%%%%%%%%%%%%%%%%%%%%%%%%%%%%%%%%%%%
\section{Fenchel conjugate and  subdifferential of $\phi \circ \lambda$}
Consider a FTvN system $(\V,\W,\lambda)$. In the previous sections, we only considered spectral functions that were real-valued. Now, in this section, we allow functions to take the value positive infinity. Given a function $\phi : \W \to \R \cup \{\infty\}$, we let $\Phi := \phi \circ \lambda : \V \to \R \cup \{\infty\}$ and (still) call it a `spectral function' on $\V$ (equivalently, $\Phi : \V \to \R \cup \{\infty\}$ is constant on $\lambda$-orbits.) The goal of this section is to describe the Fenchel conjugate and the subdifferential of $\phi \circ \lambda$ in terms of those of $\phi$. Motivation for these come from  similar results proved in the setting of normal decomposition systems \cite{lewis1996convex} and hyperbolic polynomials \cite{bauschke2001hyperbolic}. 

\smallgap

Let $X$ be a real inner product space. Consider 
$f:X\to \R\cup \{\infty\}$ with 
$\operatorname{dom}(f)=\{x\in X: f(x)\in \R\}$. Given $S\subseteq X$ and $\barx \in S \cap \operatorname{dom} (f) $, we define the {\it  subdifferential} of $f$ at $\barx$ relative to $S$ by
\[ \partial_{S} f(\barx) = \{ d \in X : f(x) - f(\barx) \geq \ip{d}{x - \barx} \,\, \forall x \in S \}. \]
Next, we recall the definition of {\it  Fenchel conjugate} of $f$ relative to $S$:
\[ f_{S}^{\ast}(z) = \sup_{x \in S} \big \{ \ip{x}{z} - f(x) \big \} \quad (z\in X). \]
When $S = X$, we suppress $S$ from the above notation/definitions. We recall the following proposition.

\begin{proposition} \label{prop: subgradient}
    Suppose $f:X\to \R\cup\{\infty\}$ and $S \subseteq X$ with $S \cap \operatorname{dom} (f)$ nonempty. Let
    $\barx \in S \cap \operatorname{dom} (f)$. Then the following hold:
        \begin{itemize}
            \item[$(a)$] (Fenchel inequality) $f(x) + f_{S}^{\ast}(y) \geq \ip{x}{y}$ for all $x \in S$ and $y \in X$.
            
            \item[$(b)$] $d \in \partial_{S} f(\barx)$ if and only if $f(\barx) + f_{S}^{\ast}(d) = \ip{\barx}{d}$.
        \end{itemize}
\end{proposition}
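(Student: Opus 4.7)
The statement to prove is a standard Fenchel-inequality/subgradient characterization, adapted to the relative setting (with respect to a subset $S \subseteq X$). Both parts follow directly from unpacking the definitions, and I do not expect any real obstacle; the only thing worth watching is that $S \cap \operatorname{dom}(f)$ is nonempty so that $f_S^*$ is a well-defined extended real (and the chain of inequalities does not involve indeterminate forms like $\infty - \infty$).

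For part $(a)$, the plan is to invoke the definition $f_S^*(y) = \sup_{x \in S}\{\ip{x}{y} - f(x)\}$ and observe that for any fixed $x \in S$, the supremand is a lower bound on the supremum. Since the case $f(x) = \infty$ makes the Fenchel inequality trivial, we may assume $x \in S \cap \operatorname{dom}(f)$, in which case rearranging $f_S^*(y) \geq \ip{x}{y} - f(x)$ yields $f(x) + f_S^*(y) \geq \ip{x}{y}$.

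For part $(b)$, I will prove the two directions separately. Assume first that $d \in \partial_S f(\barx)$, so $f(x) - f(\barx) \geq \ip{d}{x - \barx}$ for every $x \in S$. Rearranging gives $\ip{x}{d} - f(x) \leq \ip{\barx}{d} - f(\barx)$ for all $x \in S$; taking the supremum over $x \in S$ yields $f_S^*(d) \leq \ip{\barx}{d} - f(\barx)$, i.e.\ $f(\barx) + f_S^*(d) \leq \ip{\barx}{d}$. Combining with the Fenchel inequality from $(a)$ (applied at $x = \barx \in S$ and $y = d$) gives the required equality. Conversely, suppose $f(\barx) + f_S^*(d) = \ip{\barx}{d}$. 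Then for any $x \in S$, the definition of $f_S^*(d)$ gives $f_S^*(d) \geq \ip{x}{d} - f(x)$, and substituting this into the assumed equality produces $\ip{\barx}{d} - f(\barx) \geq \ip{x}{d} - f(x)$, which rearranges to $f(x) - f(\barx) \geq \ip{d}{x - \barx}$, so $d \in \partial_S f(\barx)$.

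No step uses the FTvN structure — these are purely definitional manipulations of the relative Fenchel conjugate and the relative subdifferential — so the proof will be short.
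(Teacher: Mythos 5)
Your proof is correct and follows essentially the same route as the paper's: part $(a)$ is read off from the definition of $f_S^*$ as a supremum, and part $(b)$ combines the rearranged subgradient inequality (taking the supremum over $x\in S$) with the Fenchel inequality at $x=\barx$ in one direction, and reverses the substitution in the other. Your extra care about the case $f(x)=\infty$ and the nonemptiness of $S\cap\operatorname{dom}(f)$ is a minor refinement the paper leaves implicit, but the argument is the same.
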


\begin{proof}
    $(a)$ This is clear from the definition of Fenchel conjugate.
    
    \smallgap 
    
    $(b)$ Suppose $d \in \partial_{S} f(\barx)$. Then from the definition, we have $f(\barx) + \ip{d}{x} - f(x) \leq \ip{d}{\barx}$ for all $x \in S$. Thus, $f(\barx) + f_{S}^{\ast}(d) \leq \ip{d}{\barx}$. Since $f(\barx) + f_{S}^{\ast}(d) \geq \ip{d}{\barx}$ holds from $(a)$, we have the desired equality.
    
    Conversely, assume $f(\barx) + f_{S}^{\ast}(d) = \ip{d}{\barx}$. Then, by $(a)$ we see that
    \[ \ip{d}{\barx} = f(\barx) + f_{S}^{\ast}(d) \geq f(\barx) + \ip{d}{x} - f(x), \]
    for all $x \in S$. Rewriting the inequality above, we get $f(x) - f(\barx) \geq \ip{d}{x - \barx}$ for all $x \in S$; hence $d \in \partial_{S} f(\barx)$. \qed
\end{proof}

\begin{theorem} \label{thm: Fenchel conjugate}
	Consider a FTvN system $(\V, \W, \lambda)$, $\phi : \W \to \R \cup \{\infty\}$, and $S \subseteq \V$. If $S$ is a spectral set in $\V$, then 
	\begin{equation}\label{eq: Fenchel conjugate 1}
		(\phi \circ \lambda)_{S}^{\ast}(z) = \phi_{\lambda(S)}^{\ast} \big( \lambda(z) \big) \quad (\forall z \in \V).
	\end{equation}
	Moreover, if  $(\W, \W, \mu)$ is a reduced system of $(\V, \W, \lambda)$ and $\phi$ is a spectral function, then
	\begin{equation}\label{eq: Fenchel conjugate 2}
		\phi_{\lambda(S)}^{\ast}(\lambda(z)) = \phi_{[\lambda(S)]}^{\ast} \big( \lambda(z) \big) \quad (\forall z \in \V).
	\end{equation}
\end{theorem}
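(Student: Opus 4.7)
The plan for the first identity is to reorganize the supremum defining $(\phi\circ\lambda)_{S}^{\ast}(z)$ by slicing the spectral set $S$ into $\lambda$-orbits. Since $S$ is spectral, every $x\in S$ satisfies $[x]\subseteq S$, so
\[ (\phi\circ\lambda)_{S}^{\ast}(z) = \sup_{q\in\lambda(S)}\,\sup_{\{x\,:\,\lambda(x)=q\}}\big\{\ip{x}{z}-\phi(q)\big\}. \]
For the inner supremum, $(A2)$ gives $\ip{x}{z}\leq \ip{\lambda(x)}{\lambda(z)}=\ip{q}{\lambda(z)}$ for all $x$ with $\lambda(x)=q$, while $(A3)$ produces an $x^{\ast}$ attaining this bound (equivalently, one uses the identity \eqref{eq: intro ftvn}, which the paper already noted is equivalent to $(A1)$--$(A3)$). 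Thus $\sup_{\lambda(x)=q}\ip{x}{z}=\ip{q}{\lambda(z)}$, and substituting gives exactly $\phi_{\lambda(S)}^{\ast}(\lambda(z))$. I expect this to be straightforward once the orbit-slicing is in place; the only subtle point is checking that the inner supremum over $\{x:\lambda(x)=q\}$ is indeed attained (or at least has the correct value) for every $q\in\lambda(S)$, which follows from $q\in\lambda(\V)$ and $(A3)$.

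For the second identity, the inclusion $\lambda(S)\subseteq[\lambda(S)]$ immediately yields $\phi_{\lambda(S)}^{\ast}(\lambda(z))\leq \phi_{[\lambda(S)]}^{\ast}(\lambda(z))$, so the work is in the reverse inequality. The plan is: given $q\in[\lambda(S)]$, pick $q_{0}\in\lambda(S)$ with $q\in[q_{0}]$, i.e., $\mu(q)=\mu(q_{0})$. Since $q_{0}=\lambda(x_{0})$ for some $x_{0}\in S$, condition $(C1)$ gives $\mu(q_{0})=\mu(\lambda(x_{0}))=\lambda(x_{0})=q_{0}$, so $\mu(q)=q_{0}$. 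Similarly $\mu(\lambda(z))=\lambda(z)$. Applying the FTvN inequality $(A2)$ in the reduced system $(\W,\W,\mu)$ then gives
\[ \ip{q}{\lambda(z)} \,\leq\, \ip{\mu(q)}{\mu(\lambda(z))} \,=\, \ip{q_{0}}{\lambda(z)}. \]
Because $\phi$ is assumed spectral in $(\W,\W,\mu)$, it is constant on $\mu$-orbits, so $\phi(q)=\phi(q_{0})$. Combining these two facts,
\[ \ip{q}{\lambda(z)}-\phi(q)\,\leq\,\ip{q_{0}}{\lambda(z)}-\phi(q_{0})\,\leq\,\phi_{\lambda(S)}^{\ast}(\lambda(z)), \]
and taking the supremum over $q\in[\lambda(S)]$ closes the inequality.

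The main obstacle I anticipate is not technical but conceptual: ensuring that the identity $\mu(\lambda(z))=\lambda(z)$ and the self-fixed-point property $\mu(q_{0})=q_{0}$ for $q_{0}\in\lambda(S)$ are correctly invoked (both are consequences of $(C1)$ together with $\mu^{2}=\mu$, already observed in the paper). Once these are in place the chain of inequalities is immediate, and the roles of $(A2)$ in $(\W,\W,\mu)$ and of the spectrality of $\phi$ are clearly separated. Throughout, I would implicitly use that $\lambda(S)\subseteq \operatorname{ran}\lambda=\operatorname{ran}\mu$, which guarantees that the points $q_{0}\in\lambda(S)$ really are $\mu$-fixed.
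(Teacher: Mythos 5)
Your proposal is correct and follows essentially the same route as the paper's proof: the first identity is obtained by decomposing the spectral set $S$ into $\lambda$-orbits and evaluating the inner supremum via $\sup_{\lambda(x)=q}\ip{x}{z}=\ip{q}{\lambda(z)}$ (i.e.\ \eqref{eq: intro ftvn}), and the second by combining the FTvN inequality in the reduced system with $\mu(\lambda(z))=\lambda(z)$, $\mu(q)\in\lambda(S)$ for $q\in[\lambda(S)]$, and the constancy of $\phi$ on $\mu$-orbits. The only cosmetic difference is that you argue pointwise over $q\in[\lambda(S)]$ where the paper manipulates the suprema directly; the ingredients are identical.
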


\begin{proof} Fix $z \in \V$ and suppose $S$ is a spectral set. Since $S = [S]$, we see that
    \begin{align*}
        \phi_{\lambda(S)}^{\ast} \big( \lambda(z) \big)
        &= \sup_{w \in \lambda(S)} \Big\{\! \ip{w}{\lambda(z)} - \phi(w) \Big\} \\
        &= \sup_{x \in S} \Big\{\! \ip{\lx}{\lambda(z)} - \phi\big( \lx \big) \Big\} \\
        &= \sup_{x \in S} \Big\{\! \max_{y \in [x]} \big\{\! \ip{y}{z} - \phi\big( \lx \big) \big\} \Big\} \\
        &= \sup_{x \in S} \Big\{\! \max_{y \in [x]} \big\{\! \ip{y}{z} - \phi \big( \ly \big) \big\} \Big\} \\
        &= \sup_{y \in [S]} \Big\{\! \ip{y}{z} - \phi \big( \ly \big) \Big\} \\
        &= \sup_{y \in S} \Big\{\! \ip{y}{z} - \phi \big( \ly \big) \Big\} \\
        &= (\phi \circ \lambda)_{S}^{\ast}(z).
    \end{align*}
    (In the above, we have used the equalities $\max_{y \in [x]} \ip{y}{z} = \ip{\lx}{\lambda(z)}$ and $\phi(\ly) = \phi(\lx)$ for all $y \in [x]$.) Thus we have (\ref{eq: Fenchel conjugate 1}).
    
    \smallgap
    
    Now, let $(\W, \W, \mu)$ be a reduced system of $(\V, \W, \lambda)$. Since $\lambda(S) \subseteq [\lambda(S)]$, the inequality 
    \[ \phi_{\lambda(S)}^{\ast}\big( \lambda(z) \big) \leq \phi_{[\lambda(S)]}^{\ast} \big( \lambda(z) \big) \]
 	follows immediately. We now prove the reverse inequality when $\phi$ is spectral. By the spectrality of $\phi$, we see that $\phi(u) = \phi(\mu(u))$ for all $u \in \W$. Also, we have $\lambda(z) = \mu \big( \lambda(z) \big)$ for all $z \in \V$. It follows that $\mu([\lambda(S)]) = \lambda(S)$ and 
    \begin{align*}
        \phi_{[\lambda(S)]}^{\ast}(\lambda(z)) 
        &= \sup_{u \in [\lambda(S)]} \Big\{\! \ip{u}{\lambda(z)} - \phi(u) \Big\} \\
        &\leq \sup_{u \in [\lambda(S)]} \Big\{\! \ip{\mu(u)}{\mu(\lambda(z))} - \phi \big( \mu(u) \big) \Big\} \\
        &= \sup_{w \in \mu([\lambda(S)])} \Big\{\! \ip{w}{\lambda(z)} - \phi(w) \Big\} \\
        &= \sup_{w \in \lambda(S)} \Big\{\! \ip{w}{\lambda(z)} - \phi(w) \Big\} \\
        &= \phi_{\lambda(S)}^{\ast} \big( \lambda(z) \big).
    \end{align*}
	Thus, we have the equality \eqref{eq: Fenchel conjugate 2}, completing the proof. \qed
\end{proof}

\begin{remark}
	In the the proof of the above theorem, in particular, in that of \eqref{eq: Fenchel conjugate 1}, we relied on \eqref{eq: intro ftvn}, namely, the defining condition  of a FTvN system. It turns out by specializing \eqref{eq: Fenchel conjugate 1}, we can recover condition \eqref{eq: intro ftvn}. To see this, consider two real inner product spaces $\V$ and $\W$ with a map $\lambda : \V \to \W$. Suppose 
	\[ (\phi \circ \lambda)_{S}^{\ast}(z) = \phi_{\lambda(S)}^{\ast}\big( \lambda(z) \big), \]
	for any $\phi : \W \to \R$ and any set $S$ in $\V$ of the form $\lambda^{-1}(Q)$ with $Q\subseteq \W$.
	We specialize this condition by taking (the constant function) $\phi=0$, $z=c$, and letting for any $u\in \V$, $S=\lambda^{-1}(\{\lu\})$ (which, in our notation is $[u]$). Then,
	\[ \sup_{x\in [u]} \ip{c}{x} = (\phi \circ \lambda)_{S}^{\ast}(c) =
	\phi_{\{\lu\}}^{\ast} \big( \lc \big) = \ip{\lc}{\lu}. \]

 \gap

 The following result generalizes Theorem 4.4 in \cite{lewis1996convex} proved in the setting of normal decomposition systems.
\end{remark}

\begin{corollary} \label{cor: Fenchel conjugate 2}
    Let $(\V, \W, \lambda)$ be a FTvN system with its corresponding reduced system $(\W, \W, \mu)$. If $\phi : \W \to \R \cup \{\infty\}$ is spectral on $\W$, then 
 	\[ (\phi \circ \lambda)^{\ast} = \phi^{\ast} \circ \lambda. \] 
\end{corollary}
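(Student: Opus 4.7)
The plan is to derive the corollary as a direct specialization of Theorem 4.1 by taking $S = \V$. Since $\V$ is trivially a spectral set (it is the whole space, $\V = \lambda^{-1}(\W)$), equation \eqref{eq: Fenchel conjugate 1} immediately gives
\[ (\phi \circ \lambda)^{\ast}(z) = (\phi \circ \lambda)_{\V}^{\ast}(z) = \phi_{\lambda(\V)}^{\ast}\big(\lambda(z)\big) \]
for every $z \in \V$. It therefore remains to replace the conjugate relative to $\lambda(\V)$ with the (unrestricted) conjugate $\phi^{\ast}$ evaluated at $\lambda(z)$.

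For this final step I would apply equation \eqref{eq: Fenchel conjugate 2} with $S = \V$, which (using the hypothesis that $\phi$ is spectral) yields $\phi_{\lambda(\V)}^{\ast}(\lambda(z)) = \phi_{[\lambda(\V)]}^{\ast}(\lambda(z))$. The key observation to close the loop is then that the $\mu$-orbit $[\lambda(\V)]$ is in fact all of $\W$. Indeed, for any $w \in \W$ the reduced system conditions give $\mu(w) \in \operatorname{ran}\mu = \operatorname{ran}\lambda = \lambda(\V)$, and since $\mu^{2} = \mu$ we have $\mu(\mu(w)) = \mu(w)$, which means $w$ lies in the same $\mu$-orbit as $\mu(w)$. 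Hence $w \in [\lambda(\V)]$, so $[\lambda(\V)] = \W$ and therefore $\phi_{[\lambda(\V)]}^{\ast} = \phi^{\ast}$.

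Chaining these equalities gives $(\phi \circ \lambda)^{\ast}(z) = \phi^{\ast}(\lambda(z))$ for every $z \in \V$, which is the desired identity $(\phi \circ \lambda)^{\ast} = \phi^{\ast} \circ \lambda$. There is no real obstacle here: the substantive content is entirely contained in Theorem 4.1, and the only point requiring a short verification is the identity $[\lambda(\V)] = \W$, which follows from the idempotence of $\mu$ together with $\operatorname{ran}\mu = \operatorname{ran}\lambda$ as recorded just after Definition \ref{def: reduced system}.
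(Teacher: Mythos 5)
Your proposal is correct and follows essentially the same route as the paper: specialize Theorem \ref{thm: Fenchel conjugate} to $S=\V$, use \eqref{eq: Fenchel conjugate 2}, and verify $[\lambda(\V)]=\W$ via $\operatorname{ran}\mu=\operatorname{ran}\lambda$ and the idempotence of $\mu$. Nothing is missing.
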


\begin{proof}
We let $S=\V$ in the above theorem. As the ranges of $\lambda$ and $\mu$ are the same and $\lambda = \mu \circ \lambda$, we see that for any $w\in \W$, there is an $x\in \V$ such that 
$\mu(w) = \lx = \mu \big( \lx \big)$. This means that (the above) $w$ and $\lx$ are in the same $\mu$-orbit. It follows that $\W=[\lambda(\V)]$. We now have
\[ (\phi \circ \lambda)_{\V}^{\ast}(z) = \phi_{\lambda(\V)}^{\ast} \big( \lambda(z) \big) = \phi_{[\lambda(\V)]}^{\ast} \big( \lambda(z) \big) = \phi^*_{\W} \big( \lambda(z) \big). \]
By suppressing $\V$ and $\W$, we get $(\phi \circ \lambda)^{\ast} = \phi^{\ast} \circ \lambda$. \qed
\end{proof}

Our next result deals with a subdifferential formula. We recall the definition of commutativity: $x$ and $y$ commute in $\V$ if $\ip{x}{y} = \ip{\lx}{\ly}$, or equivalently, $\lambda(x+y) = \lx + \ly$.

\begin{theorem} \label{thm: subdifferential formula}
	Let $(\V, \W, \lambda)$ be a FTvN system. Suppose $\Phi=\phi\circ \lambda$, where $\phi : \W \to \R \cup \{\infty\}$. Let $S \subseteq \V$ be a spectral set and $\barx \in S \cap \operatorname{dom}(\Phi)$. Then,
	\begin{equation}\label{eq: first subdifferential formula} y \in \partial_{S} \Phi(\barx) \Longleftrightarrow \ly \in \partial_{\lambda(S)} \phi \big (\lambda(\barx)\big ) \text{ and $y$ commutes with $\barx$}. 
 \end{equation}
Additionally, if $(\W,\W,\mu)$ is a reduced system of $(\V,
\W,\lambda)$ and $\phi$ is spectral, then,
	\[ y \in \partial_{S} \Phi(\barx) \Longleftrightarrow \ly \in \partial_{[\lambda(S)]} \phi \big (\lambda(\barx)\big ) \text{ and $y$ commutes with $\barx$}. \]
 In particular, $y \in \partial \Phi(\barx) \Longleftrightarrow \ly \in \partial \phi \big (\lambda(\barx) \big)$ and $y$ commutes with $\barx$. 
\end{theorem}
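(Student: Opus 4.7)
The plan is to reduce everything to the Fenchel equality characterization of the subgradient (Proposition 4.1$(b)$) and then squeeze using the Fenchel conjugate formula from Theorem 4.1 together with the FTvN inequality $(A2)$. Concretely, I would first apply Proposition 4.1$(b)$ to $\Phi$ on $S$: the condition $y \in \partial_S \Phi(\barx)$ is equivalent to
\[ \Phi(\barx) + \Phi^{\ast}_S(y) = \ip{y}{\barx}. \]
Since $S$ is spectral, Theorem 4.1 gives $\Phi^{\ast}_S(y) = \phi^{\ast}_{\lambda(S)}(\lambda(y))$, and by definition $\Phi(\barx) = \phi(\lambda(\barx))$. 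Thus $y \in \partial_S \Phi(\barx)$ is equivalent to
\[ \phi(\lambda(\barx)) + \phi^{\ast}_{\lambda(S)}(\lambda(y)) = \ip{y}{\barx}. \tag{$\star$} \]

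The key step is a sandwich argument. Since $\barx \in S$ implies $\lambda(\barx) \in \lambda(S)$, the Fenchel inequality (Proposition 4.1$(a)$) applied to $\phi$ on $\lambda(S)$ gives
\[ \phi(\lambda(\barx)) + \phi^{\ast}_{\lambda(S)}(\lambda(y)) \geq \ip{\lambda(y)}{\lambda(\barx)}, \]
while the FTvN inequality $(A2)$ gives $\ip{y}{\barx} \leq \ip{\lambda(y)}{\lambda(\barx)}$. Chaining these,
\[ \ip{y}{\barx} \leq \ip{\lambda(y)}{\lambda(\barx)} \leq \phi(\lambda(\barx)) + \phi^{\ast}_{\lambda(S)}(\lambda(y)). \]
Hence $(\star)$ forces both inequalities to be equalities. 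The first becoming equality is precisely the commutativity of $y$ and $\barx$, and the second equality, again by Proposition 4.1$(b)$ applied to $\phi$ on $\lambda(S)$, is precisely $\lambda(y) \in \partial_{\lambda(S)} \phi(\lambda(\barx))$. This gives the forward implication in \eqref{eq: first subdifferential formula}. Conversely, if $y$ commutes with $\barx$ and $\lambda(y) \in \partial_{\lambda(S)} \phi(\lambda(\barx))$, then both inequalities above are equalities, so $(\star)$ holds and $y \in \partial_S \Phi(\barx)$.

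For the additional statement assuming $(\W,\W,\mu)$ is reduced and $\phi$ is spectral, I would simply invoke the second assertion of Theorem 4.1, namely $\phi^{\ast}_{\lambda(S)}(\lambda(y)) = \phi^{\ast}_{[\lambda(S)]}(\lambda(y))$, to rewrite $(\star)$ with $[\lambda(S)]$ in place of $\lambda(S)$; since $\lambda(\barx) \in \lambda(S) \subseteq [\lambda(S)]$, repeating the sandwich argument with $[\lambda(S)]$ replacing $\lambda(S)$ throughout yields the equivalent characterization in terms of $\partial_{[\lambda(S)]} \phi$. The "in particular" case follows by taking $S = \V$ and using $[\lambda(\V)] = \W$ (as already verified inside the proof of Corollary 4.1, since every $w \in \W$ is $\mu$-equivalent to some $\lambda(x)$).

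The argument is essentially a transcription of the Fenchel/FTvN squeeze, so I do not foresee a substantive obstacle; the only point requiring mild care is the handling of the extended real-valued $\phi$. Because $\barx \in \operatorname{dom}(\Phi)$, the value $\phi(\lambda(\barx))$ is finite, so all of the addition and cancellation steps in $(\star)$ and in Proposition 4.1$(b)$ are unambiguous, and the squeeze argument proceeds without the $\infty - \infty$ issues that would otherwise need separate treatment.
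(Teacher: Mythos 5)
Your proposal is correct and follows essentially the same route as the paper's own proof: both reduce membership in $\partial_{S}\Phi(\barx)$ to the Fenchel equality of Proposition \ref{prop: subgradient}$(b)$, invoke Theorem \ref{thm: Fenchel conjugate} to replace $\Phi_{S}^{\ast}(y)$ by $\phi_{\lambda(S)}^{\ast}(\ly)$, and then squeeze between the FTvN inequality $(A2)$ and the Fenchel inequality to extract commutativity and $\ly \in \partial_{\lambda(S)}\phi(\lambda(\barx))$ simultaneously. Your handling of the reduced-system case via \eqref{eq: Fenchel conjugate 2} and of the finiteness of $\phi(\lambda(\barx))$ likewise matches the paper.
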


\begin{proof}
    First, assume $\ly \in \partial_{\lambda(S)} \phi (\lambda(\barx))$ and the commutativity of $\barx$ and $y$. Then, we have
    \[ \ip{\barx}{y} = \ip{\lambda(\barx)}{\ly} = \phi\big (\lambda(\barx)\big ) + \phi_{\lambda(S)}^{\ast}(\ly), \]
    by Proposition \ref{prop: subgradient}$(b)$ applied to $\phi$. Since $\phi_{\lambda(S)}^{\ast}(\ly) = \Phi_{S}^{\ast}(y)$ from Theorem \ref{thm: Fenchel conjugate}, we see that
    \[ \ip{\barx}{y} = \Phi(\barx) + \Phi_{S}^{\ast}(y), \]
    implying $y \in \partial_{S} \Phi(\barx)$.

    Conversely, given $y \in \partial_{S} \Phi(\barx)$, we have $\Phi(\barx) + \Phi_{S}^{\ast}(y) = \ip{\barx}{y}$ by Proposition \ref{prop: subgradient}$(b)$. Since $S$ is assumed to be spectral, we have $\Phi_{S}^{\ast}(y) = \phi_{\lambda(S)}^{\ast}(\ly)$;  so,
    \begin{equation} \label{eq: subgradient condition}
        \phi \big( \lambda(\barx) \big) + \phi_{\lambda(S)}^{\ast} \big( \ly \big) = \ip{\barx}{y} \leq \ip{\lambda(\barx)}{\ly}.
    \end{equation}
    On the other hand, note that $\phi(\lambda(\barx)) + \phi_{\lambda(S)}^{\ast}(\ly) \geq \ip{\lambda(\barx)}{\ly}$ by Proposition \ref{prop: subgradient}$(a)$.  Hence, we have the equality throughout  \eqref{eq: subgradient condition}, i.e.,
    \[ \phi \big( \lambda(\barx) \big) + \phi_{\lambda(S)}^{\ast} \big( \ly \big) = \ip{\barx}{y} = \ip{\lambda(\barx)}{\ly}. \]
    The equalities above imply that $\ly \in \partial_{\lambda(S)} \phi \big( \lambda(\barx) \big)$ and the commutativity of $\barx$ and $y$. Thus we have proved the equivalence in (\ref{eq: first subdifferential formula}). 

    \smallgap

    For the second part, we (further) assume that  $\phi$ is spectral on a reduced system $(\W, \W, \mu)$. Since $\Phi_{S}^{\ast}(y) = \phi_{[\lambda(S)]}^{\ast}(\ly)$ by Theorem \ref{thm: Fenchel conjugate}, the same argument above with $\lambda(S)$ replaced by $[\lambda(S)]$ yields the desired conclusion.

    \smallgap
The final statement comes from the second part by taking  $S = \V$ and noting $\W = [\lambda(\V)]$. \qed
\end{proof}

\begin{remark}
    As noted in the Introduction, in a FTvN system, one has \eqref{eq: linear sup}, which, when specialized, yields (\ref{eq: intro ftvn}). Changing $\phi$ to $-\phi$ in \eqref{eq: linear sup} we get the equality
    \begin{equation} \label{eq: modified linear sup} 
        \sup_{x \in S} \Big\{\! \ip{y}{x} - \Phi(x) \Big\} = \sup_{u \in \lambda(S)} \Big\{\! \ip{\ly}{u} - \phi(u) \Big\}, 
    \end{equation}
    where $S \subseteq \V$ is a spectral set, $\Phi = \phi\circ \lambda : \V \to \R$ is a spectral function, and $y \in \V$. Moreover, if $\barx$ solves the problem on the left, then $\barx$ commutes with $y$ and the maximum value is given by $\ip{\ly}{\lambda(\barx)} - \phi \big( \lambda(\barx) \big)$.  We now remark that when $\phi$ is real-valued, \eqref{eq: modified linear sup}  is equivalent to \eqref{eq: first subdifferential formula}. To see this, given $\barx,\, y \in \V$, notice that $y \in \partial_{S} \Phi(\barx)$ if and only if $\barx$ is a maximizer of the problem $\sup_{x \in S} \big\{\! \ip{y}{x} - \Phi(x) \big\}$. Similarly, we have $\lambda(\barx) \in \partial_{\lambda(S)} \phi \big( \lambda(\barx) \big)$ if and only if $\lambda(\barx)$ is a maximizer of the problem $\sup_{u \in \lambda(S)} \big\{\! \ip{\ly}{u} - \phi(u) \big\}$. These justify that \eqref{eq: modified linear sup} is equivalent to \eqref{eq: first subdifferential formula}. Finally, in view of an earlier remark, we conclude that \eqref{eq: intro ftvn}, \eqref{eq: linear sup}, \eqref{eq: Fenchel conjugate 1}, and \eqref{eq: first subdifferential formula} are all equivalent. 
\end{remark}

We note a simple consequence of the above theorem.

\begin{corollary}[Geometric commutation principle, \cite{gowda2017commutation}] 
    Suppose $S$ is a spectral set in the FTvN system $(\V,\W,\lambda)$ and $\overline{x}\in S$. Then, every element in the normal cone of $S$ at $\overline{x}$ commutes with $\overline{x}$.
\end{corollary}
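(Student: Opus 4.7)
The plan is to deduce this directly from Theorem~\ref{thm: subdifferential formula} by specializing $\phi$ to be a convex-analytic indicator function. Concretely, I would define $\phi : \W \to \R \cup \{\infty\}$ by setting $\phi(u) = 0$ for $u \in \lambda(S)$ and $\phi(u) = +\infty$ otherwise, and put $\Phi := \phi \circ \lambda$. Since $S$ is spectral, we have $S = \lambda^{-1}(\lambda(S))$, so $\Phi(x) = 0$ when $x \in S$ and $\Phi(x) = +\infty$ otherwise; that is, $\Phi$ is the convex-analytic indicator of $S$ in $\V$. In particular, $\operatorname{dom}(\Phi) = S$, so the hypothesis $\bar{x} \in S \cap \operatorname{dom}(\Phi)$ of Theorem~\ref{thm: subdifferential formula} reduces to $\bar{x} \in S$.

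Next I would observe that the normal cone of $S$ at $\bar{x}$, namely $\{y \in \V : \ip{y}{x - \bar{x}} \leq 0 \text{ for all } x \in S\}$, coincides with $\partial_S \Phi(\bar{x})$. Indeed, for any $x \in S$ the subdifferential inequality $\Phi(x) - \Phi(\bar{x}) \geq \ip{y}{x - \bar{x}}$ collapses to $0 \geq \ip{y}{x - \bar{x}}$, which is exactly the normal cone condition. Applying the forward implication of Theorem~\ref{thm: subdifferential formula} to this choice of $\Phi = \phi \circ \lambda$ and spectral set $S$, any $y \in \partial_S \Phi(\bar{x})$ commutes with $\bar{x}$, yielding the claim. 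There is no real obstacle here: the substantive work was carried out in proving the subdifferential formula, and the geometric commutation principle is simply the instance obtained when the spectral function is an indicator.
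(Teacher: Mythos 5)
Your proof is correct and follows essentially the same route as the paper: both realize the normal cone as $\partial_{S}(\phi\circ\lambda)(\barx)$ for a spectral function vanishing on $S$ and then invoke the forward implication of Theorem~\ref{thm: subdifferential formula}. The only (immaterial) difference is that the paper takes $\phi\equiv 0$ on all of $\W$ while you take the indicator that is $+\infty$ off $\lambda(S)$; since the subdifferential is taken relative to $S$, the two choices give the same set.
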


\begin{proof}
Recall that the normal cone of $S$ at $\overline{x}$ is the set
\[ \big\{ d \in \V: \ip{d}{x - \barx} \leq 0 \,\,\forall x \in S \big\}. \]
Every $d$ in this set belongs to $\partial_{S}(\phi\circ \lambda)(\overline{x})$, where $\phi$ is the constant function zero. By the above theorem, $d$ commutes with $\overline{x}$. \qed
\end{proof}

\begin{remark}
    Throughout this paper, we have considered spectral functions defined on the entire space. In some applications, we may need to consider functions that are partially defined (say, on a subset); for instance, in the proof of Theorem \ref{thm: Fenchel conjugate}, it is sufficient to assume that $\phi$ is spectral on $[\lambda(S)] \subseteq \W$. In such a situation, we can extend the partially defined function to the entire space still maintaining its spectrality. Here is a brief justification: 
    Given a spectral set $E = \lambda^{-1}(Q)$ in $(\V,\W,\lambda)$ with $Q\subseteq \W$, let  $\Phi : E \to \R$ be spectral on $E$, meaning that   there exists a function $\phi : Q \to \R$ such that $\Phi = \phi \circ \lambda$ on $E$. 
     Then, as $\lambda(E^c)\cap Q=\emptyset$, we can extend $\phi$ to all of $\W$ by defining its values on $Q^c$ arbitrarily. By composing this extension with $\lambda$, we get an  extension of $\Phi$ to all of $\V$. When the given FTvN system has a reduced system, say,  $(\W, \W, \mu)$, we can modify this construction. Starting with $E=\lambda^{-1}(Q)$, we rewrite $E$ as $E = \lambda^{-1}(\widetilde{Q})$, where $\widetilde{Q} = \big[ Q \cap \mu(Q) \big]$ is spectral in $\W$, see Proposition \ref{prop: description of spectrality}$(a)$. Observing that $\Phi(x)=\phi \big( \lx \big) = \widetilde{\smash[t]\phi} \big( \lx \big)$ for all $x\in E$, we follow the construction given above to extend $\widetilde{\smash[t]\phi}$ to all of $\W$ and correspondingly extend $\Phi$ to all of $\V$. 
     Clearly, appropriate modifications can be done if the functions involved are extended real-valued.
\end{remark}

\section{Concluding Remarks}
In this article, working in the unified framework of Fan-Theobald-von Neumann systems, we presented transfer principles dealing with topological/convexity properties of spectral sets and functions. We also presented Fenchel conjugate and subdifferential formulas, generalizing the results of Lewis and Bauschke et al. proved in the settings of normal decomposition systems and hyperbolic polynomials.

%\begin{Dedication}
%{\it We dedicate this paper to Henry Wolkowicz, University of Waterloo, Canada, on the occasion of his 75th birthday; We wish him a long, healthy, and productive life.}
%\end{Dedication}

\gap\gap

\textbf{Acknowledgements.} The work of J. Jeong was supported by the National Research Foundation of Korea NRF-2021R1C1C2008350.

%%%%%%%%%%%%%%%%%%%%%%%%%%%%%%%%%%

\end{document}